\newtheorem{theorem}{Theorem}[section]
\newtheorem{lemma}[theorem]{Lemma}
\theoremstyle{definition}
\newtheorem{definition}[theorem]{Definition}
\newtheorem{example}[theorem]{Example}
\newtheorem{corollary}[theorem]{Corollary}
\theoremstyle{remark}
\newtheorem{remark}[theorem]{Remark}
\numberwithin{equation}{section}
\begin{document}

\title{On Rings of Baire one functions}

%    Information for first author
\author{A. Deb Ray}
\address{Department of Pure Mathematics, University of Calcutta, 35, Ballygunge Circular Road, Kolkata - 700019, INDIA} 
\email{debrayatasi@gmail.com}
%    Information for second author
\author{Atanu Mondal}
\thanks{The second author is supported by Council of Scientific and Industrial Research, HRDG, India. Sanction No.- 09/028(0998)/2017-EMR-1}

\address{Department of Pure Mathematics, University of Calcutta, 35, Ballygunge Circular Road, Kolkata - 700019, INDIA}
\email{mail.atanu12@yahoo.com}

\begin{abstract}
This paper introduces the ring of all real valued Baire one functions, denoted by $B_1(X)$ and also the ring of all real valued bounded Baire one functions, denoted by $B_1^*(X)$. Though the resemblance between $C(X)$ and $B_1(X)$ is the focal theme of this paper, it is observed that unlike $C(X)$ and $C^*(X)$ (real valued bounded continuous functions), $B_1^*(X)$ is a proper subclass of $B_1(X)$ in almost every non-trivial situation. Introducing $B_1$-embedding and $B_1^*$-embedding, several analogous results, especially, an analogue of Urysohn's extension theorem is established.
\end{abstract}

\keywords{$B_1(X)$, $B_1^*(X)$, zero set of a Baire one function, completely separated by $B_1(X)$, $B_1$-embedded, $B_1^*$-embedded.}
\subjclass[2010]{26A21, 54C30, 54C45, 54C50}
\maketitle

\section{Introduction}
\noindent The real valued Baire class one functions on a real variable is very important and intensively studied in Real analysis. Several characterizations of Baire one functions defined on metric spaces were obtained by different mathematicians in \cite{JPFEACAPR}, \cite{DZ} etc. 
Lee, Tang and Zhao \cite{LTZ} characterized Baire class one functions in terms of the usual $\epsilon$-$\delta$ formulation as in the case of continuous functions under the assumptions that X and Y are complete separable metric spaces. In \cite{LV}, Baire one functions on normal topological spaces have been investigated and characterized by observing that pull-backs of open sets are $F_\sigma$.\\\\
\noindent The study of rings of continuous functions was initiated long back and the theory was enriched by publication of several outstanding results, cited in \cite{GK}, \cite{GJ}, \cite{EHEWITT}, \cite{MHSTONE}, etc., established by various well known mathematicians around the first half of the twentieth century. The class of all real valued Baire one functions defined on any topological space is of course a superset of the class of all real valued continuous functions on the same space and therefore, it is a natural query whether one can extend the study on the class of Baire one functions. This paper puts forward some basic results which came out of such investigation.\\\\
In Section 2, we introduce two rings, the ring $B_1(X)$ of all the real valued Baire one functions and the ring $B_1^*(X)$ of all the real valued bounded Baire one functions. We observe that $B_1(X)$ is a lattice ordered commutative ring with unity and the other one, i.e., $B_1^*(X)$ is a commutative subring with unity (and a sublattice) of $B_1(X)$. The rest of this section is devoted to build the basics that are required to carry on research in this field.\\\\
\noindent Though the focal theme of this work is to observe the similarities of $B_1(X)$ with $C(X)$, we find in section 3 that the scenario is quite different. It is well known that $C(X) = C^*(X)$ defines a pseudocompact space $X$ and there are plenty of examples, including compact spaces, which are pseudocompact. But in case of $B_1(X)$ and $B_1^*(X)$, the equality occurs very rarely. \\\\
\noindent Section 4 introduces zero sets of $B_1(X)$ and discusses several algebraic equalities involving unions and intersections of  zero sets of Baire one functions. Also, the relationships between zero set of modulus (as well as, the power) of a Baire one function $f$ with the zero set of $f$ are established. The sets which are completely separated by $B_1(X)$ is characterized via zero sets of functions from $B_1(X)$. However, the well known result that two sets $A$ and $B$ are completely separated by $C(X)$ if and only if $\overline{A}$ and $\overline{B}$ are completely separated by $C(X)$ becomes one sided in the context of $B_1(X)$. That it is indeed one-sided is supported by an example.\\\\
\noindent The final section of this paper is devoted for developing the idea of $B_1$-embedding and $B_1^*$-embedding of a set and thereby obtaining an analogue of Urysohn's Extension theorem in this new context.

\section{$B_1(X)$ and $B_1^*(X)$}
\noindent Let $X$ be any arbitrary topological space and $C(X)$ be the collection of all real valued continuous functions from $X$ to $\mathbb{R}$. We define $B_1(X)$ as the collection of all pointwise limit functions of sequnces in $C(X)$.
So, $B_1(X)=\{f:X\rightarrow\mathbb{R}$ $ :$  $\exists  $  $\{f_n\} \subseteq C(X)$, for which $\{f_n(x)\}$ pointwise converges to $f(x)$ for all $x \in$ $X$ $\}$, is called the set of Baire class one functions or Baire one functions.
It is clear that, $C(X)\subseteq B_1(X)$.\\\\
Let $f$ and $g$ be two functions in $B_1(X)$. There exist two sequences of continuous functions $\{f_n\}$ and $\{g_n\}$ such that, $\{f_n\}$ converges pointwise to $f$ and $\{g_n\}$ converges pointwise to $g$ on $X$. Then
\begin{itemize}
	\item $\{f_n+g_n\}$ pointwise converges to $f+g$.
	\item  $\{-f_n\}$ pointwise converges to $-f$.
	\item  $\{f_n.g_n\}$  converges pointwise to $f.g$. 
	\item $\{|f_n|\}$ converges pointwise to $|f|$.
\end{itemize}
\noindent In view of the above observations, it is easy to see that $(B_1(X),+,.)$ is a \textbf{commutative ring with unity $\textbf{1}$} (where $\textbf{1}: X \rightarrow \mathbb{R}$ is given by $\textbf{1}(x)=1$, $\forall x \in X$) with respect to usual pointwise addition and multiplication. \\
\noindent In \cite{LV}, Baire one functions are described in terms of pull-backs of open sets imposing conditions on domain and co-domain of functions.   
 \begin{theorem}\cite{LV} \label{P1 thm_4.1} \textnormal{(i)} For any topological space $X$ and any metric space $Y$, $B_1(X,Y)$  $\subseteq \mathscr{F_\sigma}(X,Y)$, where $B_1(X,Y)$ denotes the collection of Baire one functions from $X$ to $Y$ and $\mathscr{F_\sigma}(X,Y)=\{f:X\rightarrow Y : f^{-1}(G)$ is an $F_\sigma$ set, for any open set $G \subseteq Y$\}.\\
 \textnormal{(ii)} For a normal topological space $X$, $B_1(X,\mathbb{R})$  $= \mathscr{F_\sigma}(X,\mathbb{R})$, where $B_1(X,\mathbb{R})$ denotes the collection of Baire one functions from $X$ to $\mathbb{R}$ and $\mathscr{F_\sigma}(X,\mathbb{R})=\{f:X\rightarrow \mathbb{R} : f^{-1}(G)$ is an $F_\sigma$ set, for any open set $G \subseteq \mathbb{R}$\}.
 \end{theorem}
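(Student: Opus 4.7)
I would handle the two parts separately: in part (i) a sequence of continuous approximants is given and must be exploited, while in part (ii) such a sequence must be constructed from a structural hypothesis on preimages.

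For part (i), suppose $f = \lim_n f_n$ pointwise with each $f_n : X \to Y$ continuous, and let $G \subseteq Y$ be open. I would exhaust $G$ from inside by the closed sets
$$F_m = \{y \in Y : d(y, Y \setminus G) \geq 1/m\}, \qquad m \in \mathbb{N},$$
so that $G = \bigcup_m F_m$ (the case $G = Y$ being trivial). The key identity to establish is
$$f^{-1}(G) \;=\; \bigcup_{m \in \mathbb{N}}\bigcup_{N \in \mathbb{N}} \bigcap_{k \geq N} f_k^{-1}(F_{2m}).$$
The inclusion $\subseteq$ uses the triangle inequality: if $f(x) \in F_m$ and $d(f_k(x), f(x)) < 1/(2m)$ for all $k \geq N$, then $f_k(x) \in F_{2m}$. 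The inclusion $\supseteq$ uses that $F_{2m}$ is closed and $f_k(x) \to f(x)$ forces the limit to stay inside. Since each $f_k^{-1}(F_{2m})$ is closed by continuity, every inner intersection is closed, displaying $f^{-1}(G)$ as a countable union of closed sets.

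For part (ii), since part (i) already supplies one direction, the real content is to produce continuous approximants from the $F_\sigma$-preimage hypothesis. I would follow the classical Lebesgue--Hausdorff scheme. For each $n$, partition $\mathbb{R}$ into the half-open slabs $J_{n,k} = [k/n, (k+1)/n)$, $k \in \mathbb{Z}$; the sets $A_{n,k} = f^{-1}(J_{n,k})$ then partition $X$, and each is the difference of two $F_\sigma$ sets,
$$A_{n,k} \;=\; f^{-1}((-\infty, (k+1)/n)) \setminus f^{-1}((-\infty, k/n)).$$
From this one extracts countable families of closed subsets of $X$ exhausting the $A_{n,k}$.

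I would then construct continuous $\phi_n : X \to \mathbb{R}$ whose value at every $x \in A_{n,k}$ differs from $k/n$ by at most $O(1/n)$. The tool is Urysohn's lemma, available because $X$ is normal: applied to the closed pieces inside consecutive slabs, it produces continuous functions close to $k/n$ on the prescribed closed pieces, which one then combines via a geometric-series weighting so that the partial sums converge uniformly, thereby preserving continuity of the limit $\phi_n$. Pointwise convergence $\phi_n \to f$ follows from the $O(1/n)$ approximation on each slab. The main obstacle is precisely this last step: coordinating countably many Urysohn constructions so that (a) the resulting $\phi_n$ is globally continuous and (b) the error is simultaneously small on every slab. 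Normality and the countable $F_\sigma$ decomposition are exactly what make the bookkeeping feasible.
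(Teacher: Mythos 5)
The paper itself offers no proof of this theorem: it is imported verbatim from Vesel\'y \cite{LV} and used as a black box, so there is nothing internal to compare your argument against. Judged on its own terms, your part (i) is correct and is the standard argument: with $F_m=\{y\in Y: d(y,Y\setminus G)\geq 1/m\}$ the identity
\[
f^{-1}(G)=\bigcup_{m}\bigcup_{N}\bigcap_{k\geq N}f_k^{-1}(F_{2m})
\]
does hold (the triangle inequality gives $\subseteq$, closedness of $F_{2m}$ together with $F_{2m}\subseteq G$ gives $\supseteq$), each $f_k^{-1}(F_{2m})$ is closed, and the union is countable.

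Part (ii) has two genuine gaps. First, your slabs are half-open, so $A_{n,k}=f^{-1}([k/n,(k+1)/n))$ is only the intersection of an $F_\sigma$ set with a $G_\delta$ set (equivalently, a difference of two $F_\sigma$ sets), and such a set need not be a countable union of closed sets: already in $\mathbb{R}$ the irrationals are a difference of two $F_\sigma$ sets but are not $F_\sigma$, and one can realize the irrationals as $g^{-1}([0,1))$ for a suitable Baire one $g\leq 0$. So the step ``extract countable families of closed subsets exhausting the $A_{n,k}$'' fails. The classical Lebesgue--Hausdorff scheme avoids this by covering $\mathbb{R}$ with \emph{overlapping open} intervals, whose preimages genuinely are $F_\sigma$, and then applying a reduction principle to disjointify them into ambiguous sets.

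Second, and more seriously, your stated target --- a continuous $\phi_n$ whose value at \emph{every} $x\in A_{n,k}$ is within $O(1/n)$ of $k/n$, simultaneously for all $k$ --- is unattainable unless $f$ is already continuous: since $|f(x)-k/n|<1/n$ on $A_{n,k}$ and the $A_{n,k}$ cover $X$, such a $\phi_n$ would satisfy $\sup_X|\phi_n-f|=O(1/n)$, exhibiting $f$ as a uniform limit of continuous functions. The correct architecture is two-staged: approximate $f$ uniformly by \emph{discontinuous} countably-valued functions $g_n$ with well-behaved level sets; prove separately that each such $g_n$ is a pointwise limit of continuous functions (this is where normality, Urysohn's lemma and the delicate bookkeeping actually live); and finally pass to $f$ using the fact that a uniform limit of Baire one functions is Baire one (Lemma~\ref{P1 lem_5.3} of this paper). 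Your sketch collapses these two stages into one, and as written it would prove the false statement that every $f\in\mathscr{F_\sigma}(X,\mathbb{R})$ is continuous.
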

\noindent Define a partial order `$\leq$' on $B_1(X)$ by $f \leq g$ iff $f(x) \leq g(x)$, $\forall x \in X$. It is clear that $(B_1(X), \leq)$ is a \textbf{lattice}, where $sup\{f,g\} = f\vee g= \frac{1}{2}\{(f+g)+|f-g|\}$ and $inf\{f,g\} = f\wedge g = \frac{1}{2}\{(f+g)-|f-g|\}$ both are in $B_1(X)$. Also for any $f,g,h \in B_1(X)$ 
\begin{itemize}
	\item $f \leq g \implies f+h \leq g+h$.
	\item $f \geq 0$ and $g \geq 0 \implies f.g \geq 0.$
\end{itemize} 
\noindent So, $B_1(X)$ is a \textbf{commutative lattice ordered ring with unity}. Moreover $C(X)$ is a \textbf{commutative subring with unity and also a sublattice of $B_1(X)$}.\\
If a Baire one function $f$ on $X$ is a unit in the ring $B_1(X)$ then $\{x \in X \ : \ f(x)=0\}= \emptyset$. The following result shows that in case of a normal space, this condition is also sufficient.
\begin{theorem}
For a normal space $X$, $f \in B_1(X)$ is a unit in $B_1(X)$ if and only if $Z(f) = \{x\in X \ : \ f(x) = 0\} =\emptyset$.
\end{theorem}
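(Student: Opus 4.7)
The forward direction is the easy one: if $f$ is a unit in $B_1(X)$, then $fg = \mathbf{1}$ for some $g \in B_1(X)$, so $f(x)g(x) = 1$ at every $x \in X$; this forces $f(x) \neq 0$ everywhere, i.e.\ $Z(f) = \emptyset$. Note that this direction is independent of normality.

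For the reverse direction, assume $Z(f) = \emptyset$, so the pointwise reciprocal $1/f : X \to \mathbb{R}$ is a well-defined function. The plan is to show $1/f \in B_1(X)$ via the characterization in Theorem~\ref{P1 thm_4.1}(ii), which uses normality: it suffices to verify that $(1/f)^{-1}(G)$ is an $F_\sigma$ subset of $X$ for every open $G \subseteq \mathbb{R}$. Once we have $1/f \in B_1(X)$, the identity $f \cdot (1/f) = \mathbf{1}$ shows that $f$ is a unit.

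The key computation is to rewrite the preimage in terms of $f$ rather than $1/f$. Set
\[
H \;=\; \bigl\{\, t \in \mathbb{R}\setminus\{0\} \;:\; 1/t \in G \,\bigr\}.
\]
Since $Z(f) = \emptyset$, we have $1/f(x) \in G$ if and only if $f(x) \in H$, so $(1/f)^{-1}(G) = f^{-1}(H)$. The set $H$ is open in $\mathbb{R} \setminus \{0\}$ as the preimage of $G$ under the continuous map $t \mapsto 1/t$, and because $0 \notin H$, it is in fact open in $\mathbb{R}$. Applying Theorem~\ref{P1 thm_4.1}(ii) to $f \in B_1(X)$ (using normality of $X$), we conclude $f^{-1}(H)$ is $F_\sigma$, hence so is $(1/f)^{-1}(G)$.

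The only genuine subtlety is the small observation that $H$ is open in all of $\mathbb{R}$ rather than merely in $\mathbb{R}\setminus\{0\}$, which hinges on $0 \notin H$; this is exactly where the hypothesis $Z(f)=\emptyset$ is used to make the reciprocal globally defined. Everything else is a routine application of the $F_\sigma$-preimage characterization available in the normal setting.
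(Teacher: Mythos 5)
Your proof is correct and follows essentially the same route as the paper: both directions use the characterization of Theorem~\ref{P1 thm_4.1}(ii) for normal spaces and show $(1/f)^{-1}(G)$ is $F_\sigma$ by pulling the preimage back through $f$. The only difference is cosmetic — the paper does an explicit case analysis on open intervals according to whether $0$ lies in them, while your observation that $H=\{t\neq 0 : 1/t\in G\}$ is open in all of $\mathbb{R}$ handles arbitrary open sets uniformly and is arguably a little cleaner.
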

\begin{proof}
If $f\in B_1(X)$ is a unit then clearly the condition holds. Let $f \in B_1(X)$ be such that $Z(f) = \emptyset$. Define $\frac{1}{f}(x) = \frac{1}{f(x)}$, for all $x \in X$. To show that $\frac{1}{f} \in B_1(X)$. Let $U = (a, b)$ be any open interval in $\mathbb{R}$. It is enough to show that $\left(\frac{1}{f}\right)^{-1}(U)$ is $F_\sigma$. $\left(\frac{1}{f}\right)^{-1}(U) = \{x\in X \ : \ a < \frac{1}{f(x)} < b\}$. \\
Case 1 : Suppose $0 \notin U$. Then $\{x\in X \ : \ a < \frac{1}{f(x)} < b\} = \{x\in X \ : \ \frac{1}{b} < f(x) < \frac{1}{a}\}$ $= f^{-1}\left(\frac{1}{b}, \frac{1}{a}\right)$, when  $a \neq 0,b \neq 0$ and $\left(\frac{1}{f}\right)^{-1}(U) = f^{-1}\left(\frac{1}{b}, \infty\right) $ or $f^{-1}\left(-\infty, \frac{1}{a}\right)$ according as  $a=0$ or $b=0$ . In any case the resultant set is an $F_\sigma$ set. \\
Case 2 : Let $0 \in U$. Since $Z(f) = \emptyset$, $f(x) \neq 0$ and hence $\frac{1}{f(x)} \neq 0$, $\left(\frac{1}{f}\right)^{-1}(U) = \left(\frac{1}{f}\right)^{-1}\left((a, 0)\right) \cup \left(\frac{1}{f}\right)^{-1}\left((0, b)\right)$. Then $\left(\frac{1}{f}\right)^{-1}(U) = \{x\in X \ : \ a < \frac{1}{f(x)} < 0\} \bigcup \{x\in X \ : \ 0 < \frac{1}{f(x)} < b\} =$ $\{x\in X \ : \ -\infty < f(x) < \frac{1}{a}\} \bigcup \{x\in X \ : \ \frac{1}{b} < f(x) < \infty\}$ $= f^{-1}\left(-\infty, \frac{1}{a}\right) \cup f^{-1}\left(\frac{1}{b}, \infty\right)$, which is an $F_\sigma$ set.\\
Hence, $\frac{1}{f} \in B_1(X)$. 						   
\end{proof}
\noindent However, the following theorem provides an useful sufficient criterion to determine units of $B_1(X)$, where $X$ is any topological space.
\begin{theorem}\label{P1 thm_2.1}
Let $X$ be a topological space and $f\in B_1(X)$ be such that $f(x)>0$ \emph{(}or $f(x)<0$\emph{)}, $\forall x \in X$, then $\frac{1}{f}$ exists and belongs to $B_1(X)$.
\end{theorem}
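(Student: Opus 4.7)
The strategy is to produce an explicit sequence of continuous functions that converges pointwise to $1/f$, by modifying any approximating sequence for $f$ so that it stays uniformly away from zero on a scale that shrinks with $n$.

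Assume $f(x) > 0$ for all $x \in X$ (the case $f(x) < 0$ reduces to this by applying the result to $-f$). Since $f \in B_1(X)$, there is a sequence $\{f_n\} \subseteq C(X)$ with $f_n \to f$ pointwise. These $f_n$ may take the value $0$ (or negative values), so $1/f_n$ need not be continuous. To remedy this, I would define
\[
g_n \;=\; f_n \vee \tfrac{1}{n},
\]
the pointwise maximum of $f_n$ with the constant function $\tfrac{1}{n}$. Because the maximum of two continuous functions is continuous, each $g_n \in C(X)$, and by construction $g_n(x) \geq \tfrac{1}{n} > 0$ everywhere, so $1/g_n$ is a well-defined continuous function on $X$.

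The main verification is that $g_n \to f$ pointwise. Fix $x \in X$. Since $f(x) > 0$, choose $N$ large enough that $\tfrac{1}{N} < f(x)/2$ and that $|f_n(x) - f(x)| < f(x)/2$ for all $n \geq N$. Then for $n \geq N$ we have $f_n(x) > f(x)/2 > \tfrac{1}{n}$, hence $g_n(x) = f_n(x)$, and therefore $g_n(x) \to f(x)$. Because $f(x) \neq 0$ and the reciprocal map is continuous on $(0,\infty)$, it follows that $\tfrac{1}{g_n}(x) \to \tfrac{1}{f(x)}$ for every $x \in X$. Thus $1/f$ is the pointwise limit of the continuous sequence $\{1/g_n\}$, so $1/f \in B_1(X)$.

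The only subtlety is the choice of the truncation level $\tfrac{1}{n}$: a fixed positive constant would not work because $f$ need not be bounded below by a positive constant, while letting the truncation shrink to $0$ at rate $\tfrac{1}{n}$ guarantees it is eventually dominated by $f_n$ at each individual point. This is the one step worth checking carefully; the rest is routine.
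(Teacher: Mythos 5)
Your proof is correct and follows essentially the same strategy as the paper's: replace the approximating sequence $\{f_n\}$ by a continuous sequence bounded below by $\tfrac{1}{n}$ that still converges pointwise to $f$, then invert using continuity of the reciprocal map away from $0$. The only cosmetic difference is that you use $f_n \vee \tfrac{1}{n}$ where the paper uses $|f_n| + \tfrac{1}{n}$; both work for the same reason.
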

\begin{proof}
Without loss of generality let, $f \in B_1(X)$ and $f(x)>0,  \forall x \in X$. Then there exists a sequence of continuous functions $\{f_n\}$ such that  $\{f_n\}$ converges pointwise to $f$. Now we construct a sequence of continuous functions $\{g_n\}$, where $g_n(x)=|f_n(x)|+\frac{1}{n}$, $\forall n \in \mathbb{N}$ and $\forall x \in X$. Clearly $g_n(x)>0$, $\forall n \in \mathbb{N}$ and $\forall x \in X$.\\
We now show that, $\{g_n(x)\}$ converges to $f(x)$, for each $x \in X$.\\ For each $x \in X$, $\{f_n(x)\}$ converges to $f(x)$ and $\{\frac{1}{n}\}$ converges to $0$ imply that $\{g_n(x)\}$ converges to $|f(x)|= f(x)$, $\forall x \in X$.\\
Consider the function $g: \mathbb{R}-\{0\} \rightarrow \mathbb{R}$ defined by $g(y)=\frac{1}{y}$, $g$ being continuous $\{g\circ g_n\}$ is a sequence of continuous functions from $X$ to $\mathbb{R}$. Our claim is $\{g \circ g_n\}$ converges to $g \circ f$ on $X$.\\
Let, $\epsilon > 0$. Then there exists a $\delta > 0$ such that $|(g\circ g_n)(x)-(g \circ f)(x)|$ $=|g(g_n(x))-g(f(x))| < \epsilon$,    whenever $|g_n(x)-f(x)|< \delta$  (By using continuity of $g$).\\
Since $\{g_n$\} converges pointwise to $f$, for $\delta >0$, $\exists$ $K \in \mathbb{N}$ such that, $|g_n(x)-f(x)|< \delta$, whenever $n \geq K$. So, $|(g\circ g_n)(x)-(g \circ f)(x)| < \epsilon$ whenever $n \geq K$.\\
Hence, $\{(g \circ g_n)\}$ converges pointwise to $g \circ f$, i.e. $g\circ f \in B_1(X)$. Now $(g\circ f)(x)=\frac{1}{f(x)}$ shows that $\frac{1}{f}$ belongs to $B_1(X)$.
\end{proof}
\noindent In the last theorem, we have shown that composition of a typical continuous function $g:\mathbb{R}-\{0\} \rightarrow \mathbb{R}$ given by $g(x)=\frac{1}{x}$ and a typical Baire one function $f:X\rightarrow \mathbb{R}$ produces a Baire one function $g\circ f$. In the next theorem we further generalize this.
\begin{theorem}\label{P1 thm_2.2}
Let $f$ be any Baire one function on $X$ and $g: \mathbb{R \rightarrow \mathbb{R}}$ be a continuous function then the composition function $g\circ f$ is also a Baire one function.
\end{theorem}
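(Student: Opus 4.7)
The plan is a direct, one-line generalization of the argument used in Theorem~\ref{P1 thm_2.1}. Since $f \in B_1(X)$, by definition there is a sequence $\{f_n\} \subseteq C(X)$ converging pointwise to $f$ on $X$. I would form the candidate approximating sequence $\{g \circ f_n\}$, noting that each $g \circ f_n$ is continuous as a composition of continuous functions (here no modification of $f_n$ is needed, in contrast to the previous proof, because $g$ is defined on all of $\mathbb{R}$ rather than on $\mathbb{R}\setminus\{0\}$).

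It then remains to verify that $g \circ f_n \to g \circ f$ pointwise. Fix $x \in X$ and $\varepsilon > 0$. By continuity of $g$ at the point $f(x) \in \mathbb{R}$, choose $\delta > 0$ such that $|y - f(x)| < \delta$ implies $|g(y) - g(f(x))| < \varepsilon$. Since $f_n(x) \to f(x)$ in $\mathbb{R}$, there exists $K \in \mathbb{N}$ with $|f_n(x) - f(x)| < \delta$ for all $n \geq K$, and hence $|g(f_n(x)) - g(f(x))| < \varepsilon$ for all $n \geq K$. This gives pointwise convergence of $\{g \circ f_n\}$ to $g \circ f$, placing $g \circ f$ in $B_1(X)$.

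There is no real obstacle here; the argument is essentially the continuity-plus-pointwise-convergence observation already exploited in Theorem~\ref{P1 thm_2.1}, stripped of the auxiliary trick (adding $\tfrac{1}{n}$) that was only needed there to keep the approximating functions inside the domain of the discontinuous map $y \mapsto 1/y$. The only thing worth emphasizing in the writeup is that continuity of $g$ on all of $\mathbb{R}$ is exactly what allows the unmodified sequence $\{g \circ f_n\}$ to serve as the witness.
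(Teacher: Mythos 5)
Your proposal is correct and follows essentially the same route as the paper's proof: take a sequence $\{f_n\}\subseteq C(X)$ converging pointwise to $f$, note each $g\circ f_n$ is continuous, and use continuity of $g$ at $f(x)$ together with $f_n(x)\to f(x)$ to get $g(f_n(x))\to g(f(x))$. Your write-up is, if anything, slightly more careful in localizing the continuity of $g$ at the point $f(x)$.
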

\begin{proof}
Since $f \in B_1(X)$, there exists a sequence of continuous functions $\{f_n\}$ which converges pointwise to $f$. The functions $g\circ f_n$ are all defined and continuous functions on $X$, $\forall n \in \mathbb{N}$. Let $\epsilon > 0$ be any arbitrary positive real number and $x \in X$. By continuity of $g$ there exists a positive $\delta$ depending on $\epsilon$ such that, $|(g\circ f_n)(x)-(g\circ f)(x)|=|g(f_n(x))-g(f(x))| < \epsilon$, whenever $|f_n(x)-f(x)|< \delta$. Again by using pointwise convergence of $\{f_n\}$ we can find a natural number $K$ such that, $|f_n(x)-f(x)|< \delta, \forall n \geq K$. So,  $|(g\circ f_n)(x)-(g\circ f)(x)|< \epsilon, \forall$ $n \geq K$.\\
Hence, $g\circ f$ is a Baire one function on $X$.
\end{proof}

\noindent We introduce another subcollection of $B_1(X)$, called bounded Baire one functions, denoted by $B_1^*(X)$ consisting of all real valued bounded Baire one functions on $X$. i.e., $B_1^*(X)= \{f\in B_1(X):f$ is bounded on $X\}$. $B_1^*(X)$ also forms a commutative lattice ordered ring with unity \textit{$1$}, which is a subring and sublattice of $B_1(X).$\\
\section{Is $B_1^*(X)$ always a proper subring of $B_1(X)$?}
\noindent In case of rings of continuous functions we have seen that there are spaces for which $C^*(X)$ coincides with $C(X)$, where $C(X)$ and $C^*(X)$ denote respectively the collection of all real valued continuous functions and the collection of all real valued bounded continuous functions on $X$. For example, if $X$ is compact then $C(X)=C^*(X)$. In fact, the spaces for which $C(X)=C^*(X)$ are known as pseudocompact spaces. But for Baire one functions, the scenario is quite different. We show in the next theorem that for most of the spaces unbounded Baire one functions do exist.
\begin{theorem} \label{P1 thm_3.1}
Let $X$ be any topological space. If $f \in C(X)$ is such that $0 \leq f(x) \leq 1$, $\forall x \in X$ and $0$ is a limit point of the range set $f(X)$, then there exists an unbounded Baire one function on $X$. (i.e., $B_1^*(X)$ is a proper subset of $B_1(X)$)
\end{theorem}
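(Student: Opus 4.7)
The plan is to exhibit an explicit unbounded function $g$ on $X$ and realise it as the pointwise limit of a sequence of continuous functions built directly from $f$, so that $g \in B_1(X) \setminus B_1^*(X)$.

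I would define
$$g(x) = \begin{cases} 1/f(x) & \text{if } f(x) \neq 0, \\ 0 & \text{if } f(x) = 0, \end{cases}$$
and first dispose of unboundedness, which is the easy half. Since $0$ is a limit point of $f(X) \subseteq [0,1]$, there exist $x_n \in X$ with $0 < f(x_n) \to 0$, whence $g(x_n) = 1/f(x_n) \to \infty$. So $g$ is not bounded on $X$, and it remains to certify $g \in B_1(X)$.

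The substantive issue is the possible presence of zeros of $f$. The naive choice $h_n = 1/(f + 1/n)$ is continuous but blows up to $n$ on $Z(f)$, so it fails to converge there. To suppress the blow-up while preserving the reciprocal elsewhere, I would use
$$g_n(x) = \frac{f(x)}{f(x)^2 + 1/n}, \qquad n \in \mathbb{N}.$$
Since $f(x)^2 + 1/n \geq 1/n > 0$ everywhere on $X$, each $g_n$ is a quotient of continuous functions with non-vanishing denominator, hence continuous. A pointwise case check shows $g_n \to g$: if $f(x) = 0$ then $g_n(x) = 0$ for every $n$, matching $g(x) = 0$; and if $f(x) \neq 0$ then dividing numerator and denominator by $f(x)^2$ gives $g_n(x) \to 1/f(x) = g(x)$. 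Hence $g$ is a pointwise limit of continuous functions on $X$, i.e.\ $g \in B_1(X)$, and combined with the unboundedness already noted we conclude $B_1^*(X) \subsetneq B_1(X)$.

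The only real obstacle is spotting the approximating sequence $\{g_n\}$; the trick is to multiply so that the prospective singularity on $Z(f)$ is cancelled, and the form $f/(f^2 + 1/n)$ achieves exactly this. Once $\{g_n\}$ is identified, the remaining verification is routine arithmetic and a direct appeal to the definition of $B_1(X)$, with no normality or other topological hypothesis on $X$ required.
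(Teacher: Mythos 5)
Your proof is correct and follows essentially the same approach as the paper: both exhibit the same unbounded limit function $g$ (equal to $1/f$ off $Z(f)$ and $0$ on $Z(f)$) as a pointwise limit of continuous functions built from $f$. The only difference is cosmetic — your approximants $f/(f^2+1/n)$ are continuous for free, whereas the paper's piecewise choice ($n^2 f$ on $f^{-1}([0,\tfrac 1n])$, $1/f$ on $f^{-1}([\tfrac 1n,1])$) requires a pasting-lemma check at the seam.
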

\begin{proof}
  For each $n \in \mathbb{N}$, define $g_n:X \rightarrow \mathbb{R}$ by\\
	$$g_n(x)= \begin{cases}
	n^2 f(x) & if \ x \in f^{-1}([0,\frac{1}{n}])\\
	\frac{1}{f(x)} & \ if \ x \in f^{-1}([\frac{1}{n},1])
			\end{cases}$$
Each $g_n$ is continuous and it is clear that $\{g_n(x)\}$ converges pointwise to the function $g:X \rightarrow \mathbb{R}$ defined by
	$$g(x)= \begin{cases}
				0 & \ if \ x \in Z(f)\\
	\frac{1}{f(x)} & \ if \ x \notin Z(f)
	\end{cases}$$
So, $g$ is a Baire one function on $X$. Since $0$ is a limit point of $f(X)$, $g$ is unbounded Baire one function.
\end{proof}
\begin{remark} \label{P1 cor_3.2}
If $B_1(X)=B_1^*(X)$, then for every $f \in C(X)$, $0$ cannot be a limit point of $f(X)$. In fact, we can say more, if $B_1(X)=B_1^*(X)$, then for every $f \in C(X)$, $r$ is not a limit point of $f(X)$, where $r$ is any real number. This follows from the fact that if $r$ is a limit point of the range set of the continuous function $f$, then $0$ becomes a limit point of the set $g(X)$, where $g=f-r$ and we can apply the previous theorem to the function $(0 \vee |g|) \wedge 1$ to get an unbounded function.
\end{remark}
\begin{remark}
One may observe that whenever $B_1(X)=B_1^*(X)$ and $X$ possesses atleast one non-constant continuous function then $X$ cannot be connected, because in that case $f(X)$ must be an interval and hence possesses a limit point. Also, it follows easily from $B_1(X)=B_1^*(X)$ that $X$ is pseudocompact. Therefore, it is natural to ask, Which class of spaces is precisely determined by $B_1(X)=B_1^*(X)$?
\end{remark}
\noindent As a consequence of Remark~\ref{P1 cor_3.2} we obtain,
\begin{theorem} \label{P1 thm_3.4}
A completely Hausdorff space $X$ \emph{(}i.e., where any two distinct points are completely separated by continuous function\emph{)} is totally disconnected if $B_1(X)=B_1^*(X)$.	
\end{theorem}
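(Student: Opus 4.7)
The plan is to deduce total disconnectedness by producing, for any two distinct points $x,y \in X$, a clopen partition of $X$ that separates them; this will force every quasi-component (and hence every component) to be a singleton.

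First I would fix distinct $x,y\in X$. Since $X$ is completely Hausdorff, there is $f\in C(X)$ with $f(x)=0$ and $f(y)=1$, which after composing with the continuous map $t\mapsto (0\vee t)\wedge 1$ may be assumed to satisfy $0\le f\le 1$ on $X$. Now I invoke Remark~\ref{P1 cor_3.2}: the hypothesis $B_1(X)=B_1^*(X)$ forces $f(X)$ to have no limit points in $\mathbb{R}$. Thus $f(X)$ is a closed discrete subset of $\mathbb{R}$, and since it is contained in the compact set $[0,1]$, it must in fact be \emph{finite}.

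Next, enumerate $f(X)=\{0=t_{0}<t_{1}<\dots<t_{k}=1\}$ and choose any $c\in (t_{0},t_{1})$, which exists because the enumeration is finite and strictly increasing. Then $c\notin f(X)$, so
\[
X \;=\; f^{-1}\bigl((-\infty,c)\bigr)\;\cup\;f^{-1}\bigl((c,\infty)\bigr),
\]
and the two pieces are disjoint open sets (hence both clopen) with $x$ in the first and $y$ in the second. This exhibits a clopen separation of any two distinct points of $X$, which implies that the component of each point is a singleton, i.e. $X$ is totally disconnected.

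I do not anticipate a serious obstacle: the whole argument is essentially an unpacking of Remark~\ref{P1 cor_3.2}. The only subtlety is recognising that ``no real number is a limit point of $f(X)$'' is equivalent to $f(X)$ being closed and discrete in $\mathbb{R}$, which combined with the boundedness produced by truncation forces $f(X)$ to be finite and therefore to contain a gap through which one can cut $X$ into clopen pieces.
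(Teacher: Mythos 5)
Your proposal is correct, but it takes a genuinely different route from the paper's. The paper argues by contradiction at the level of components: it assumes some component $C$ contains two distinct points, separates them by a continuous $f$, and observes that the connectedness of $C$ forces $f(C)\subseteq f(X)$ to be a nondegenerate interval, hence to have a limit point, contradicting Remark~\ref{P1 cor_3.2}. You instead use Remark~\ref{P1 cor_3.2} positively: after truncating, $f(X)\subseteq[0,1]$ has no limit points, hence is finite, hence has a gap $c$ through which $f^{-1}((-\infty,c))$ and $f^{-1}((c,\infty))$ give a clopen partition separating $x$ from $y$. Both arguments are short and both hinge on the same remark, but yours is constructive rather than by contradiction and actually yields the formally stronger conclusion that $X$ is totally separated (all quasi-components are singletons), from which total disconnectedness follows since components sit inside quasi-components. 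The paper's version avoids the small amount of bookkeeping about closed discrete subsets of a compact interval by appealing directly to the intermediate value property of connected sets. Your steps all check out: the truncation preserves $f(x)=0$, $f(y)=1$; a subset of $[0,1]$ with no limit points is finite by compactness; and $0,1\in f(X)$ guarantees the gap $(t_0,t_1)$ is nonempty.
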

\begin{proof}
Since $X$ is completely Hausdorff, it possesses a non-constant continuous function. So, by previous remark $B_1(X)=B_1^*(X)$ implies $X$ is disconnected. Now assume that $C$ be any component of $X$ and $x,y$ be two distinct points in $C$. Since $X$ is completely Hausdorff, there exists a continuous function $f$ on $X$ such that $f(x)=0$ and $f(y)=1.$ Then the function $g=f\big|_C$ is a continuous function on $C$ and $C$ being connected $g(C)$ must be an interval. So, $f(X)$ has a limit point as $g$ is the restriction function of $f$ on $C$. This contradicts to the fact that $B_1(X)=B_1^*(X)$. So, $C$ cannot contain more than one point. Since $C$ is arbitrary component of $X$, every component of $X$ contains single point. Hence, $X$ is totally disconnected.
\end{proof}
\begin{remark}
Converse of the theorem is not true. $\mathbb{Q}$, the set of all rational numbers is both completely Hausdorff and totally disconnected but $\mathbb{Q}$ possesses an unbounded real valued continuous function, hence an unbounded Baire one function.
\end{remark}
\noindent In the context of ring homomorphism we may further get results similar to the known results about homomorphism from $C(Y)$ (or $C^*(Y)$) to $C(X)$. 
\begin{theorem} \label{P1 thm_3.6}
Every ring homomorphism $t$ from $B_1(Y)$ (or $B_1^*(Y)$) to $B_1(X)$ is a lattice homomorphism.
\end{theorem}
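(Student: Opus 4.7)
The plan is to reduce the statement to the single identity $t(|f|) = |t(f)|$ and then extract the lattice identities from the algebraic formulas for $\vee$ and $\wedge$. Recall from Section~2 that in $B_1(Y)$ (and in $B_1(X)$) we have
\[
f \vee g = \tfrac{1}{2}\bigl((f+g) + |f-g|\bigr), \qquad f \wedge g = \tfrac{1}{2}\bigl((f+g) - |f-g|\bigr).
\]
Since a ring homomorphism (understood here, as elsewhere in the paper, to send $\mathbf{1}$ to $\mathbf{1}$, so that $t(c) = c$ for every constant integer or rational element and in particular $t(\tfrac{1}{2}) = \tfrac{1}{2}$) commutes with sums, differences, and scalar multiplication by $\tfrac{1}{2}$, once I know $t(|h|) = |t(h)|$ for every $h \in B_1(Y)$ (taking $h = f-g$), applying $t$ to the two displayed formulas immediately yields $t(f \vee g) = t(f) \vee t(g)$ and $t(f \wedge g) = t(f) \wedge t(g)$.

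The heart of the proof, then, is to show that $t$ preserves the absolute value. My plan is to do this in two short steps. First I would show that $t$ carries non-negative functions to non-negative functions: if $f \in B_1(Y)$ with $f \geq 0$, then the function $y \mapsto \sqrt{\max(y,0)}$ is continuous on $\mathbb{R}$, so by Theorem~\ref{P1 thm_2.2} its composition with $f$ lies in $B_1(Y)$ and equals $\sqrt{f}$. Hence $f = (\sqrt{f})^2$, and therefore $t(f) = t(\sqrt{f})^2$, which is a square and hence pointwise non-negative in $B_1(X)$. Second, applied to $h \in B_1(Y)$, this gives $t(|h|) \geq 0$; and since $|h|^2 = h^2$, we have
\[
t(|h|)^2 = t(h^2) = t(h)^2.
\]
Taking pointwise square roots of the non-negative function $t(|h|)$ and comparing with $|t(h)|$ (which is the unique non-negative element of $B_1(X)$ whose square is $t(h)^2$) gives $t(|h|) = |t(h)|$, as required.

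For the $B_1^*$ version, one notes that $\sqrt{f}$ remains bounded whenever $f$ is, so the same construction is internal to $B_1^*(Y)$; nothing else in the argument changes. The step I expect to require the most care is the preservation of non-negativity, because it uses the specific fact that $B_1(Y)$ is closed under composition with continuous real functions (Theorem~\ref{P1 thm_2.2}) to manufacture a square root inside $B_1(Y)$ itself; without this closure property, the characterization ``non-negative $=$ square'' would not be available, and there would be no algebraic way to force $t$ to respect the order. Once that characterization is in hand, the remaining deductions are purely formal manipulations with the ring operations.
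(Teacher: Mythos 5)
The paper states this theorem without proof (it is the analogue of Gillman--Jerison's Theorem~1.6 for $C(Y)$), and your argument is exactly the intended standard one: reduce to $t(|h|)=|t(h)|$, obtain $t(f)\geq 0$ for $f\geq 0$ by writing $f=(\sqrt{f})^2$ with $\sqrt{f}\in B_1(Y)$ via Theorem~\ref{P1 thm_2.2}, and then compare squares. This is correct (and the appeal to $t(\tfrac{1}{2})=\tfrac{1}{2}$ can even be avoided by applying $t$ to $2(f\vee g)=f+g+|f-g|$ and cancelling the factor $2$ pointwise in $B_1(X)$).
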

\begin{theorem} \label{P1 thm_3.7}
Every ring homomorphism $t$ from $B_1(Y)$ (or $B_1^*(Y)$) to $B_1(X)$ takes bounded functions to bounded functions.
\end{theorem}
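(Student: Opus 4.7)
The plan is to mimic the classical argument for ring homomorphisms between rings of continuous functions, which rests on an algebraic characterisation of boundedness: a function $f$ satisfies $|f|\leq n$ exactly when $n^{2}\textbf{1}_{Y}-f^{2}$ is a square in the ring. Since ring homomorphisms preserve identities of the form $a=b^{2}$, applying $t$ will transport this bound to the image.

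First, I would take $f \in B_1(Y)$ (respectively, $f \in B_1^{*}(Y)$) with $|f(y)|\leq n$ for every $y \in Y$ and some positive integer $n$. The map $\varphi:\mathbb{R}\rightarrow \mathbb{R}$ defined by $\varphi(t)=\sqrt{\max\{t,0\}}$ is continuous on $\mathbb{R}$, so Theorem \ref{P1 thm_2.2} yields that $h:=\varphi\circ(n^{2}\textbf{1}_{Y}-f^{2})$ belongs to $B_1(Y)$. Since $n^{2}\textbf{1}_{Y}-f^{2}\geq 0$ by the choice of $n$, we obtain the ring identity $h^{2}=n^{2}\textbf{1}_{Y}-f^{2}$.

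Next, I would apply $t$ to both sides. By additivity and multiplicativity, $t(h)^{2}=n^{2}\,t(\textbf{1}_{Y})-t(f)^{2}$. Since $t(h)^{2}(x)\geq 0$ for every $x\in X$, rearrangement gives $t(f)^{2}\leq n^{2}\,t(\textbf{1}_{Y})$ in the pointwise order of $B_1(X)$.

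Finally, set $e:=t(\textbf{1}_{Y})$. From $\textbf{1}_{Y}^{2}=\textbf{1}_{Y}$ we get $e^{2}=e$, so $e(x)\in\{0,1\}$ at every $x\in X$, and in particular $e\leq \textbf{1}_{X}$. Combining inequalities gives $t(f)^{2}\leq n^{2}\textbf{1}_{X}$, whence $|t(f)(x)|\leq n$ for all $x$ and $t(f)\in B_1^{*}(X)$, as required. The only subtle point I expect is that $t$ is not a priori assumed to be unital, so one cannot simply identify $t(\textbf{1}_{Y})$ with $\textbf{1}_{X}$; but since every idempotent in $B_1(X)$ is pointwise $0$ or $1$, the idempotent $e$ is automatically dominated by $\textbf{1}_{X}$, which is precisely what the estimate requires.
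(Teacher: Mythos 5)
The paper states Theorem~\ref{P1 thm_3.7} without any proof, simply listing it among results asserted to carry over from the classical theory of homomorphisms between rings of continuous functions, so there is no written argument to compare against; your proposal supplies the missing proof and it is correct. It is essentially the standard Gillman--Jerison argument transplanted to $B_1$: boundedness is encoded algebraically by exhibiting $n^{2}\textbf{1}_{Y}-f^{2}$ as a square, ring homomorphisms preserve squares (hence nonnegativity), and $t(\textbf{1}_{Y})$ is an idempotent, hence pointwise $0$ or $1$. The one step that genuinely needs re-justification in the $B_1$ setting --- that a nonnegative element of $B_1(Y)$ is the square of an element of $B_1(Y)$ --- you handle correctly by composing with the continuous map $s\mapsto\sqrt{\max\{s,0\}}$ and invoking Theorem~\ref{P1 thm_2.2}; note also that your $h$ is bounded (between $0$ and $n$), so it lies in $B_1^{*}(Y)$ and the argument covers the $B_1^{*}(Y)$ case of the statement without modification. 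Two further points in your favour: your route is self-contained in that it does not presuppose Theorem~\ref{P1 thm_3.6} (the lattice-homomorphism statement, via which one could alternatively write $|t(f)|=t(|f|)\leq t(n\textbf{1}_{Y})=n\,t(\textbf{1}_{Y})\leq n\textbf{1}_{X}$), and your closing caution about $t$ not being assumed unital --- resolved by the idempotency of $t(\textbf{1}_{Y})$ --- is exactly the right one. The only blemish is notational: you use $t$ both for the homomorphism and for the real variable of $\varphi$.
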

\begin{corollary}
	If a completely Hausdorff space $X$ is not totally disconnected, then $B_1(X)$ cannot be a homomorphic image of $B_1^*(Y)$, for any $Y$.
\end{corollary}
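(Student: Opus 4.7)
The plan is to argue by contradiction, leveraging Theorem~\ref{P1 thm_3.7} and Theorem~\ref{P1 thm_3.4} in sequence. Assume that $X$ is a completely Hausdorff space which is \emph{not} totally disconnected, and suppose, for contradiction, that there exists a surjective ring homomorphism $t : B_1^*(Y) \to B_1(X)$ for some space $Y$.

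First, I would observe that every element of $B_1^*(Y)$ is, by definition, a bounded Baire one function on $Y$. Applying Theorem~\ref{P1 thm_3.7} to $t$, every image $t(f)$ for $f \in B_1^*(Y)$ is a bounded function in $B_1(X)$, so $t(B_1^*(Y)) \subseteq B_1^*(X)$. Since $t$ is assumed surjective, this forces the chain of inclusions
\[
B_1(X) \;=\; t(B_1^*(Y)) \;\subseteq\; B_1^*(X) \;\subseteq\; B_1(X),
\]
and hence $B_1(X) = B_1^*(X)$.

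Now I would invoke Theorem~\ref{P1 thm_3.4}: since $X$ is completely Hausdorff and $B_1(X) = B_1^*(X)$, $X$ must be totally disconnected. This contradicts the standing hypothesis that $X$ is not totally disconnected, completing the proof.

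I do not anticipate a serious obstacle here, since the work is already done inside Theorems~\ref{P1 thm_3.7} and~\ref{P1 thm_3.4}; the corollary is essentially a packaging of those two results. The only subtle point worth flagging is that ``homomorphic image'' must be read as the existence of a ring homomorphism from $B_1^*(Y)$ \emph{onto} $B_1(X)$, so that surjectivity can be used to promote the containment $t(B_1^*(Y)) \subseteq B_1^*(X)$ into the equality $B_1(X) = B_1^*(X)$.
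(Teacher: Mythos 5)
Your argument is correct and is precisely the derivation the paper intends: the corollary is stated without proof immediately after Theorem~\ref{P1 thm_3.7}, and the intended reading is exactly your chain --- surjectivity plus preservation of boundedness forces $B_1(X)=B_1^*(X)$, whence Theorem~\ref{P1 thm_3.4} yields total disconnectedness, contradicting the hypothesis. Your flag about reading ``homomorphic image'' as an onto homomorphism is the right one, and nothing further is needed.
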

\begin{corollary}
	$B_1(X)$ and $B_1^*(X)$ are isomorphic if and only if they are identical.
\end{corollary}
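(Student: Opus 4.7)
The plan is to dispatch the ``if'' direction trivially (if the two collections coincide as subrings of $\mathbb{R}^X$, the identity map is a ring isomorphism) and then use Theorem~\ref{P1 thm_3.7} to obtain the ``only if'' direction with essentially no additional work.

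For the nontrivial direction, suppose there is a ring isomorphism between $B_1(X)$ and $B_1^*(X)$. Passing to the inverse if necessary, I may assume $\varphi : B_1^*(X) \to B_1(X)$ is a ring isomorphism. Viewing $\varphi$ as a ring homomorphism from $B_1^*(Y)$ into $B_1(X)$ with $Y = X$, Theorem~\ref{P1 thm_3.7} applies and tells me that $\varphi$ carries bounded functions to bounded functions. Every element of the domain $B_1^*(X)$ is bounded by definition, so the image $\varphi(B_1^*(X))$ lies entirely inside $B_1^*(X)$. But $\varphi$ is surjective onto $B_1(X)$, so
\[
B_1(X) \;=\; \varphi\bigl(B_1^*(X)\bigr) \;\subseteq\; B_1^*(X).
\]
Combined with the automatic inclusion $B_1^*(X) \subseteq B_1(X)$, this forces equality as subrings of $\mathbb{R}^X$, which is exactly the ``identical'' conclusion.

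There is no real obstacle; the entire argument is a one-line application of Theorem~\ref{P1 thm_3.7} to the isomorphism (or its inverse). The only point requiring a moment of care is matching the direction of $\varphi$ with the hypothesis of Theorem~\ref{P1 thm_3.7}, whose stated codomain is $B_1(X)$ rather than $B_1^*(X)$; this is why I work with $\varphi : B_1^*(X) \to B_1(X)$ and exploit the fact that isomorphism is symmetric to reduce to this case.
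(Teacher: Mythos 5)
Your argument is correct and is exactly the intended one: the paper states this as a corollary of Theorem~\ref{P1 thm_3.7} without writing out a proof, and the deduction it has in mind is precisely your application of that theorem to the isomorphism $\varphi : B_1^*(X) \to B_1(X)$, giving $B_1(X) = \varphi(B_1^*(X)) \subseteq B_1^*(X)$.
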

\begin{theorem}
	Let, $t$ be a homomorphism from $B_1(Y)$ into $B_1(X)$, such that $B_1^*(X) \subseteq t(B_1(Y))$. Then $t$ maps $B_1^*(Y)$ onto $B_1^*(X)$.
\end{theorem}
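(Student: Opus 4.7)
The plan is to combine Theorem 3.7 (which gives the inclusion $t(B_1^*(Y)) \subseteq B_1^*(X)$) with a truncation argument showing that every $h \in B_1^*(X)$ admits a bounded preimage. Given the lattice-homomorphism result (Theorem 3.6), the natural candidate is to take any preimage $f$ of $h$, clip it to the interval $[-M, M]$ where $M$ is a bound for $h$, and verify that the clipped function still maps to $h$.

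Before the truncation works I first need $t$ to preserve bounded real constants. I would argue this in two steps. \emph{Unitality:} since $\mathbf{1}_X \in B_1^*(X) \subseteq t(B_1(Y))$, choose $g \in B_1(Y)$ with $t(g) = \mathbf{1}_X$. Then
\[
t(g) \;=\; t(\mathbf{1}_Y \cdot g) \;=\; t(\mathbf{1}_Y)\cdot t(g) \;=\; t(\mathbf{1}_Y)\cdot \mathbf{1}_X \;=\; t(\mathbf{1}_Y),
\]
forcing $t(\mathbf{1}_Y) = \mathbf{1}_X$. \emph{Scalars:} this immediately gives $t(n\cdot \mathbf{1}_Y)=n\cdot \mathbf{1}_X$ for every integer $n$, and by solving $q\cdot t((p/q)\mathbf{1}_Y)=p\cdot \mathbf{1}_X$, for every rational. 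For real $r$, pick rationals $p_k\uparrow r$ and $q_k\downarrow r$; by Theorem 3.6 the map $t$ is a lattice homomorphism and hence order-preserving, so $p_k\cdot \mathbf{1}_X \le t(r\cdot \mathbf{1}_Y)\le q_k\cdot \mathbf{1}_X$ for every $k$, yielding $t(r\cdot \mathbf{1}_Y)=r\cdot \mathbf{1}_X$.

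With this in hand, take any $h \in B_1^*(X)$ and choose $M>0$ with $\lvert h\rvert \le M$. By hypothesis pick $f \in B_1(Y)$ with $t(f)=h$, and set
\[
f^{*} \;=\; \bigl(f \vee (-M\cdot \mathbf{1}_Y)\bigr)\wedge (M\cdot \mathbf{1}_Y).
\]
Since $B_1(Y)$ is a lattice containing the constants, $f^{*} \in B_1(Y)$; and since $-M \le f^{*} \le M$ on $Y$, actually $f^{*} \in B_1^{*}(Y)$. Applying the lattice-homomorphism property of $t$ together with the preservation of constants just established,
\[
t(f^{*}) \;=\; \bigl(t(f)\vee (-M\cdot \mathbf{1}_X)\bigr)\wedge (M\cdot \mathbf{1}_X) \;=\; \bigl(h\vee (-M\cdot \mathbf{1}_X)\bigr)\wedge (M\cdot \mathbf{1}_X) \;=\; h,
\]
the last equality holding pointwise because $-M \le h \le M$. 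Combined with Theorem 3.7 this proves $t(B_1^*(Y))=B_1^*(X)$.

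The main obstacle is the step establishing $t(r\cdot \mathbf{1}_Y)=r\cdot \mathbf{1}_X$ for arbitrary real $r$: ring homomorphisms a priori only commute with integer scalars, so this identity is not automatic and genuinely uses both the unitality forced by the hypothesis $B_1^*(X)\subseteq t(B_1(Y))$ and the order-preservation supplied by Theorem 3.6. Once that identity is in place, the truncation argument is immediate.
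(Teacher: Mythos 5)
Your proof is correct and complete: the unitality argument, the passage from integer to rational to real scalars via the order-preservation supplied by Theorem 3.6, and the truncation $\bigl(f \vee (-M\cdot \mathbf{1}_Y)\bigr)\wedge (M\cdot \mathbf{1}_Y)$ together give $t(B_1^*(Y)) \supseteq B_1^*(X)$, while Theorem 3.7 gives the reverse inclusion. The paper states this theorem without any proof (as it does Theorems 3.6 and 3.7), so there is nothing to compare against; your argument is the standard Gillman--Jerison one (their 1.7) transplanted to $B_1$, which is evidently what the authors had in mind, and you are right to flag $t(r\cdot\mathbf{1}_Y)=r\cdot\mathbf{1}_X$ as the one step that genuinely needs the lattice structure rather than the ring structure alone.
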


\section{Zero sets in $B_1(X)$}
\noindent The zero set $Z(f)$ of a function $f \in B_1(X)$ is defined by $Z(f)= \{x \in X: f(x)=0\}$ and the collection of all zero sets in $B_1(X)$ is denoted by $Z(B_1(X))$. We say a subset $E$ of $X$ is a zero set in $B_1(X)$ if $E=Z(f)$, for some $f \in B_1(X)$.\\
We call a set to be a cozero set in $B_1(X)$ if it is the complement of a zero set in $B_1(X)$. $Z(B_1(X))$ is closed under finite union and finite intersection as $Z(f) \cup Z(g)=Z(f.g)$ and $Z(f) \cap Z(g)=Z(f^2+g^2)=Z(|f|+|g|)$.\\
It is evident that, $Z(f)=Z(|f|)=Z(f^n)$, for all $f \in B_1(X)$ and for all $n \in$ $\mathbb{N}$, $Z(\textbf{0})=X$ and $Z(\textbf{1})=\emptyset$. Here $\textbf{0}$ and $\textbf{1}$ denote the constant functions whose values are $0$ and $1$ on $X$.\\\\
\textbf{Examples of zero and cozero sets in $B_1(X):$}
\begin{itemize}
	\item Every zero set of a continuous function is also a zero set of a Baire one function. Every clopen set $K$ of $X$ is in $Z(B_1(X))$, as it is a zero set of a continuous function.
	\item  For any $f \in B_1(X), \{x \in : f(x) \geq 0\}$ and $\{x \in : f(x) \leq 0\}$ belong to $Z(B_1(X))$ and they are the zero sets of the functions $f-|f|$  and $f+|f|$ respectively.
	\item For any real number $r\in \mathbb{R}$, $\{x \in X: f(x) \leq r \}$ and $\{x \in X: f(x) \geq r \}$ are also in $Z(B_1(X))$.
	\item  For any real number $r\in \mathbb{R}$, $\{x \in X: f(x) < r \}$ and $\{x \in X: f(x) > r \}$ are cozero sets in $B_1(X)$.\\
\end{itemize}  
\noindent It is easy to observe that, for any $f \in B_1(X)$ there exists $g \in B_1^*(X)$ given by $g=f \wedge \textbf{1}$ such that $Z(f)=Z(g)$. Hence $Z(B_1(X))$ and $Z(B_1^*(X))$ produce same family of zero sets. 
\begin{theorem} \label{P1 thm_4.2}
For any $f \in B_1(X), Z(f)$ is a $G_\delta$ set.
\end{theorem}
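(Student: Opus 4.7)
The plan is to derive this directly from Theorem~\ref{P1 thm_4.1}(i), which says that for any $f \in B_1(X)$, the preimage of every open set in $\mathbb{R}$ is an $F_\sigma$ subset of $X$. The trick is to apply this fact not to $Z(f)$ itself, but to its complement.

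First, observe that $\mathbb{R} \setminus \{0\} = (-\infty, 0) \cup (0, \infty)$ is open in $\mathbb{R}$. Consequently, by Theorem~\ref{P1 thm_4.1}(i), the set $f^{-1}(\mathbb{R} \setminus \{0\}) = \{x \in X : f(x) \neq 0\} = X \setminus Z(f)$ is $F_\sigma$ in $X$. Since the complement of an $F_\sigma$ set is, by definition, a $G_\delta$ set, it follows immediately that $Z(f)$ is $G_\delta$. If a more explicit decomposition is desired, one can instead write $X \setminus Z(f) = \bigcup_{n=1}^\infty f^{-1}\bigl((-\infty, -1/n) \cup (1/n, \infty)\bigr)$ and invoke Theorem~\ref{P1 thm_4.1}(i) on each term, using the fact that a countable union of $F_\sigma$ sets is $F_\sigma$.

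There is essentially no obstacle here, since all the heavy lifting is already in the cited theorem; the argument is really just an observation about how open/closed sets in $\mathbb{R}$ pull back. The one pitfall worth flagging is the superficially natural formula $Z(f) = \bigcap_{n=1}^\infty f^{-1}\bigl((-1/n, 1/n)\bigr)$: each factor is only guaranteed to be $F_\sigma$ (it need not be open, since $f$ is in general discontinuous), so this presentation merely exhibits $Z(f)$ as an $F_{\sigma\delta}$ set, which is strictly weaker than what we want. Passing to the complement first is what makes the argument go through cleanly.
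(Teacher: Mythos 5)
Your argument is correct and is essentially identical to the paper's own proof: both apply Theorem~\ref{P1 thm_4.1}(i) to the open set $\mathbb{R}\setminus\{0\}$ to conclude that $X\setminus Z(f)$ is $F_\sigma$, hence $Z(f)$ is $G_\delta$. Your additional remarks on the pitfall of the $\bigcap_n f^{-1}\bigl((-1/n,1/n)\bigr)$ decomposition are apt but not needed.
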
 
\begin{proof}
Since $\mathbb{R}$ is a metric space, by Theorem~\ref{P1 thm_4.1}, $f^{-1}(G)$ is an $F_\sigma$ set, for every open set $G$ of $\mathbb{R}$. Therefore in particular $f^{-1}(\mathbb{R}-\{0\})$ is an $F_\sigma$ set. i.e. $Z(f)=f^{-1}(\{0\})$ is a $G_\delta$ set.
\end{proof}
\begin{corollary}
Every cozero set in $Z(B_1(X))$ is $F_\sigma$.
\end{corollary}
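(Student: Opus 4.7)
The plan is to obtain this as a one-line complementary corollary to the preceding Theorem~\ref{P1 thm_4.2}. By definition, a cozero set in $Z(B_1(X))$ is a set of the form $X \setminus Z(f)$ for some $f \in B_1(X)$. So the only content to verify is the general topological fact that the complement of a $G_\delta$ set is $F_\sigma$, which is just De Morgan's law applied to a countable intersection of open sets.

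Concretely, I would fix a cozero set $U = X \setminus Z(f)$ with $f \in B_1(X)$. By Theorem~\ref{P1 thm_4.2}, $Z(f)$ is $G_\delta$, so we may write $Z(f) = \bigcap_{n=1}^{\infty} G_n$ for some sequence of open sets $G_n \subseteq X$. Taking complements and applying De Morgan yields
\[
U \;=\; X \setminus \bigcap_{n=1}^{\infty} G_n \;=\; \bigcup_{n=1}^{\infty} (X \setminus G_n),
\]
which exhibits $U$ as a countable union of closed sets, i.e.\ an $F_\sigma$ set.

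There is no real obstacle here; the statement is essentially a translation of Theorem~\ref{P1 thm_4.2} under complementation. As an alternative route worth mentioning, one could bypass the $G_\delta$ intermediate step entirely: since $\mathbb{R} \setminus \{0\}$ is open in $\mathbb{R}$, part~(i) of Theorem~\ref{P1 thm_4.1} applied to $f \in B_1(X)$ directly tells us that $f^{-1}(\mathbb{R} \setminus \{0\}) = X \setminus Z(f)$ is $F_\sigma$. Either argument is short and non-technical; I would present the De Morgan version because it emphasizes that the corollary is genuinely about duality with the preceding result.
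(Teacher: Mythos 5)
Your proof is correct and matches the paper's (implicit) argument: the corollary is just Theorem~\ref{P1 thm_4.2} under complementation, and indeed the paper's own proof of that theorem first shows $f^{-1}(\mathbb{R}\setminus\{0\})$ is $F_\sigma$ before passing to the zero set, which is exactly your alternative route. Nothing is missing.
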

\noindent \textbf{Observation:} Countable union of zero sets in $B_1(X)$ need not be a zero set in $B_1(X)$. For example, $\mathbb{Q}$ can be written as a countable union of singleton sets and each singleton set is a zero set in $B_1(\mathbb{R})$, but $\mathbb{Q}$ is not a zero set in $B_1(\mathbb{R})$, as it is not a $G_\delta$ set in $\mathbb{R}$. However it can be proved that, $Z(B_1(X))$ is closed under countable intersection. To establish this result we need two important lemmas, which are already proved for the functions of a real variable. We generalize these results here for any arbitrary topological spaces.
  
\begin{lemma}\label{P1 lem_4.4}
	If $f:X\rightarrow \mathbb{R}$ is a bounded Baire one function with bound $M$, where $X$ is any topological space, then there exists a sequence of continuous function $\{f_n\}$ such that, each $f_n$ has the same bound $M$ and $\{f_n\}$ converges pointwise to $f$ on $X$.
\end{lemma}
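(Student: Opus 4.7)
The plan is to start from any sequence $\{g_n\} \subseteq C(X)$ converging pointwise to $f$, which exists by the definition of $B_1(X)$, and then modify each $g_n$ so that it is trapped in the band $[-M,M]$ without destroying either continuity or the pointwise limit.

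The key idea is truncation. I would define the continuous ``clamp'' map $\tau : \mathbb{R} \to [-M,M]$ by $\tau(y) = (y \vee -M) \wedge M$, and set $f_n = \tau \circ g_n$, i.e.\ $f_n(x) = (g_n(x) \vee -M) \wedge M$ for every $x \in X$. Since each $g_n$ is continuous and the operations $\vee$ and $\wedge$ with constants preserve continuity, each $f_n$ is continuous on $X$, and by construction $|f_n(x)| \le M$ for all $x \in X$ and all $n \in \mathbb{N}$.

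It remains to verify pointwise convergence $f_n(x) \to f(x)$ for every $x \in X$. Here the crucial observation is that $|f(x)| \le M$, so $\tau(f(x)) = f(x)$. Since $\tau$ is continuous on $\mathbb{R}$ and $g_n(x) \to f(x)$, continuity yields $\tau(g_n(x)) \to \tau(f(x)) = f(x)$, that is, $f_n(x) \to f(x)$. Thus $\{f_n\}$ is the desired sequence in $C(X)$, each term bounded by $M$, converging pointwise to $f$.

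No step here looks like a real obstacle: the construction is elementary, the estimate is automatic from the fact that $f$ itself lies in $[-M,M]$, and the main hypothesis on $f$ is used only to ensure $\tau$ fixes $f$ pointwise. The only subtlety worth being careful about in the write-up is to state clearly that $M$ is meant as a two-sided bound (so that clamping to $[-M,M]$ is the right operation) and to note that the argument does not require $X$ to be normal or metric, matching the level of generality claimed in the lemma.
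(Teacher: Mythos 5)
Your proposal is correct and is essentially identical to the paper's proof: the paper defines $f_n$ by the same three-case truncation of $g_n$ to $[-M,M]$, which is exactly your clamp $\tau\circ g_n$. Your verification of pointwise convergence via continuity of $\tau$ and the fact that $\tau$ fixes $f(x)$ is actually spelled out more carefully than in the paper, which merely asserts the convergence.
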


\begin{proof}
	Let $\{g_n\}$ be a sequence of continuous functions that converges pointwise to $f$ on $X$. Suppose also that $|f(x)|\leq M$, $\forall x \in X$. Define $\{f_n\}$ by\\
	$f_n(x)=$
	$\begin{cases}
	-M & if$ $g_n(x) \leq -M\\
		g_n(x) & if$ $-M \leq g_n(x) \leq M\\
		M & if$ $g_n(x) \geq M.
	\end{cases}$\\
	Each $f_n$ of the sequence of functions $\{f_n(x)\}$ is continuous and has bound $M$. Also $\{f_n(x)\}$ converges pointwise to $f$
 on $X$ with bound $M$.
\end{proof}
\begin{lemma}\label{P1 lem_4.5}
	Let $\{f_k\}$ be a sequence of Baire one functions defined on a topological space $X$ and let $\sum \limits_{k=1}^{\infty}M_k < \infty$, where each $M_k >0$. If $|f_k(x)| \leq M_k$,  $\forall k \in \mathbb{N}$ and $\forall x \in X$, then the function $f(x)= \sum\limits_{k=1}^{\infty}f_k(x)$ is also a Baire one function on $X$.
\end{lemma}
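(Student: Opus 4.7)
The strategy is a diagonal construction. For each $k$, apply Lemma~\ref{P1 lem_4.4} to the bounded Baire one function $f_k$ (with bound $M_k$) to obtain a sequence of continuous functions $\{f_{k,n}\}_{n\in\mathbb{N}}$ with $|f_{k,n}(x)| \leq M_k$ for every $n$ and $x$, such that $f_{k,n}(x) \to f_k(x)$ pointwise on $X$ as $n\to\infty$. Note that $f$ itself is well-defined and bounded, since $|f(x)| \leq \sum_{k=1}^{\infty} M_k < \infty$, so there is no convergence issue in defining the target.

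Next, I would set $h_n(x) = \sum_{k=1}^{n} f_{k,n}(x)$. Each $h_n$ is continuous on $X$ as a finite sum of continuous functions, so if I can show $h_n \to f$ pointwise on $X$, then $f \in B_1(X)$ by definition.

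The heart of the argument is the pointwise convergence estimate. Fix $x\in X$ and $\epsilon>0$. Using $\sum M_k<\infty$, pick $N$ with $\sum_{k=N+1}^{\infty} M_k < \epsilon/4$. Then for $n>N$, split
\[
|h_n(x) - f(x)| \leq \sum_{k=1}^{N} |f_{k,n}(x) - f_k(x)| + \sum_{k=N+1}^{n} |f_{k,n}(x) - f_k(x)| + \sum_{k=n+1}^{\infty} |f_k(x)|.
\]
The second sum is bounded by $\sum_{k=N+1}^{n} 2M_k \leq 2\sum_{k=N+1}^{\infty} M_k < \epsilon/2$ (using $|f_{k,n}|\leq M_k$ and $|f_k|\leq M_k$), and the third by $\sum_{k=n+1}^{\infty} M_k < \epsilon/4$. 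For the first (finite) sum, since $N$ is fixed and $f_{k,n}(x)\to f_k(x)$ as $n\to\infty$ for each $k\leq N$, I can choose $n$ large enough to make this sum less than $\epsilon/4$. Combining gives $|h_n(x)-f(x)|<\epsilon$ for all sufficiently large $n$.

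The main subtlety, and the reason Lemma~\ref{P1 lem_4.4} is invoked, is that without the uniform bound $|f_{k,n}|\leq M_k$ on the approximating continuous functions, the tail estimate $\sum_{k=N+1}^{n}|f_{k,n}(x)-f_k(x)|$ cannot be controlled by $\sum M_k$, and the diagonal sum $h_n$ could behave badly. Once that uniform bound is in place, the rest is a standard $\epsilon/4$ splitting as above, so this uniform truncation is really the only non-routine ingredient in the proof.
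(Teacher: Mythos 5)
Your proof is correct and follows essentially the same route as the paper: the same appeal to Lemma~\ref{P1 lem_4.4} for the uniform bounds $|f_{k,n}|\leq M_k$, the same diagonal sum $h_n=\sum_{k=1}^{n}f_{k,n}$, and the same head/middle/tail splitting of $|h_n(x)-f(x)|$. The only difference is cosmetic bookkeeping ($\epsilon/4$ budgeting versus the paper's final bound of $3\epsilon$).
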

\begin{proof}
	Since each $f_k$ is a Baire one function, for each positive integer $k$ there exists a sequence of continuous functions $\{g_{ki}\}_{i=1}^\infty$ on $X$ such that $\{g_{ki}\}_{i=1}^\infty$ converges pointwise to the function $f_k$ on $X$ and by lemma \ref{P1 lem_4.4} we can choose $\{g_{ki}\}_{i=1}^\infty$ in such a way that, $|g_{ki}| \leq M_k,  \forall i \in \mathbb{N}$.\\
	 For each $n\in \mathbb{N}$, let $h_n=g_{1n}+g_{2n}+ \ldots +g_{nn}$.\\
	 It is easy to verify that each $h_n$ is continuous on $X$. We will show that $\{h_n\}$ converges pointwise to $f$ on $X$.\\
	 Fix a point $x\in X$ and let $\epsilon >0$ be an arbitrary positive real number. Since  $\sum\limits_{k=1}^{\infty}M_k < \infty$, we can find $K \in \mathbb{N}$ so that $\sum\limits_{k=K+1}^{\infty}M_k < \epsilon$.\\Now choose an integer $N>K$ such that \\
	 $|g_{ki}(x)-f_k(x)|< \frac{\epsilon}{K}$ for $1\leq k \leq K$ and $\forall$ $i \geq N$.\\
	 For any $n \geq N$ we have,\\
	 $|h_n(x)-f(x)|=|\sum\limits_{k=1}^{n}g_{kn}(x)-\sum\limits_{k=1}^{\infty}f_k(x)|\leq |\sum\limits_{k=1}^{n}(g_{kn}(x)-f_k(x))| + |\sum\limits_{k=n+1}^{\infty}f_k(x)| $\\
	$\leq |\sum\limits_{k=1}^{K}(g_{kn}(x)-f_k(x))| + \sum\limits_{k=K+1}^{n}|g_{kn}(x)|+ \sum\limits_{k=K+1}^{\infty}|f_k(x)| < \sum\limits_{k=1}^{K}\frac{\epsilon}{K}+2\sum\limits_{k=K+1}^{\infty}M_k < 3\epsilon$.\\
	Since $\epsilon$ is arbitrary, it follows that $\{h_n(x)\}$ converges pointwise to $f$ on $X$. Hence $f(x)= \sum\limits_{k=1}^{\infty}f_k(x)$  is a Baire one function on $X$.
\end{proof}
\begin{theorem} \label{P1 thm_4.6}
$Z((B_1(X)))$ is closed under countable intersection.
\end{theorem}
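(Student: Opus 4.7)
The plan is to reduce the countable intersection to a single zero set by forming an absolutely convergent series of bounded, nonnegative Baire one functions with the same zero sets as the given ones, and then apply Lemma~\ref{P1 lem_4.5}.

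Concretely, suppose we are given a sequence $\{f_n\} \subseteq B_1(X)$ and we want to show $\bigcap_{n=1}^{\infty} Z(f_n) \in Z(B_1(X))$. The first step is to normalize: for each $n$, set $g_n = |f_n| \wedge \mathbf{1}$. Then $g_n \in B_1^*(X)$ (since $B_1(X)$ is closed under $|\cdot|$ and $\wedge$), $0 \leq g_n \leq 1$ everywhere, and $Z(g_n) = Z(|f_n|) = Z(f_n)$ by the basic identities noted at the start of Section~4.

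Next, define $f(x) = \sum_{n=1}^{\infty} \dfrac{g_n(x)}{2^n}$. Setting $M_n = 2^{-n}$, we have $\left|\dfrac{g_n}{2^n}\right| \leq M_n$ pointwise and $\sum_{n} M_n = 1 < \infty$. Lemma~\ref{P1 lem_4.5} then applies directly and yields $f \in B_1(X)$. It remains to identify $Z(f)$ with $\bigcap_n Z(f_n)$. Since every summand $g_n(x)/2^n$ is nonnegative, $f(x) = 0$ if and only if $g_n(x) = 0$ for all $n$, which is exactly the condition $x \in \bigcap_n Z(g_n) = \bigcap_n Z(f_n)$. Hence $Z(f) = \bigcap_{n=1}^{\infty} Z(f_n)$, which completes the proof.

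There is not really a serious obstacle here: the two preparatory lemmas already do the heavy lifting. The only point requiring any care is to make sure the summands are nonnegative so that the zero set of the sum equals the intersection of the zero sets of the summands; truncating by $|\cdot| \wedge \mathbf{1}$ handles both this nonnegativity and the uniform boundedness needed for Lemma~\ref{P1 lem_4.5} simultaneously.
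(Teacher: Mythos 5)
Your proof is correct and follows essentially the same route as the paper: the paper truncates via $g_n = |f_n| \wedge \tfrac{1}{2^n}$ and sums directly, whereas you truncate at $\mathbf{1}$ and weight by $2^{-n}$, which is the same idea up to a trivial rescaling. Both arguments invoke Lemma~\ref{P1 lem_4.5} with $M_n = 2^{-n}$ and conclude by nonnegativity of the summands.
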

\begin{proof}
Let $Z(f_n) \in Z(B_1(X)), \forall n\in \mathbb{N}$.\\
We define $g_n(x)=|f_n(x)| \wedge \frac{1}{2^n}$, $\forall x\in X$ and $\forall n \in \mathbb{N}$ and let $g(x)= \sum\limits_{n=1}^{\infty}g_n(x)$. Here, $|g_n(x)| \leq \frac{1}{2^n}, \forall n \in \mathbb{N}$ and $\forall x \in X$. Also $\sum\limits_{n=1}^{\infty}\frac{1}{2^n} < \infty$. So, by lemma \ref{P1 lem_4.5} $g \in B_1(X)$ and $Z(g)= \bigcap\limits_{n=1}^\infty Z(g_n)=\bigcap\limits_{n=1}^\infty Z(f_n)$.
\end{proof}
\begin{definition}
	Two subsets $A$ and $B$ are said to be completely separated in $X$ by $B_1(X)$, if there exists a function $f \in B_1^*(X)$ such that $f(A)=r$ and $f(B)=s$ with $r<s$ and $r\leq f \leq s$, $\forall x\in X$.
\end{definition}
\noindent It is enough to say that $A$ and $B$ are completely separated by $B_1(X)$, if we get a function $g$ in $B_1(X)$ satisfying $g(x) \leq r, \forall x \in A$ and $g(x) \geq s, \forall x \in B$, for then, $(r \vee g) \wedge s$ has the required property. Moreover we can replace $r$ and $s$ by $0$ and $1$, as there always exists a continuous bijection $h$ from $[r,s]$ to $[0,1]$ and $h \circ g$ is a Baire one function with the desired property.\\
\noindent It is well known that two sets $A$ and $B$ are completely separated by $C(X)$ if and only if $\overline{A}$ and $\overline{B}$ are completely separated by $C(X)$. But in case of completely separated by $B_1(X)$, this result is one-sided. Certainly, $\overline{A}$ and $\overline{B}$ are completely separated by $B_1(X)$ implies that $A$ and $B$ are completely separated by $B_1(X)$. But the converse is not true, as seen in the following example.
	\begin{example}
		In $[0,1]$, the sets $A=[0,1)$ and $B=\{1\}$ are completely separated by $B_1([0,1])$, because $\{f_n\} \subseteq C[0,1]$ defined by \\
	  	$f_n(x)=x^n$, $\forall x \in [0,1]$ and $\forall n \in \mathbb{N}$ converges to the function $f(x)$ defined by\\
		
			$f(x)=$
		$\begin{cases}
		0 & if$ $ 0 \leq x < 1\\
		1 & if$ $ x = 1.\\
		\end{cases}$\\
		So, $f$ belongs to $B_1^*[0,1]$ and $f(A)=0, f(B)=1$. But $\overline{A}$, $\overline{B}$ are not disjoint and therefore are not completely separated by $B_1([0,1])$.
	\end{example}
\begin{theorem} \label{P1 thm_4.9}
Two subsets of $X$ are completely separated by $B_1(X)$ if and only if they are contained in disjoint zero sets in $Z(B_1(X))$.
\end{theorem}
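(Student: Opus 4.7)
The plan is to adapt the classical $C(X)$ argument to this setting, invoking the closure properties of $B_1(X)$ and the list of examples of zero sets collected earlier in Section~4. Two directions must be checked.

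For the forward implication, suppose $A$ and $B$ are completely separated by $B_1(X)$. Using the normalisation in the paragraph following the definition of complete separation, we may take $f \in B_1^*(X)$ with $0 \leq f \leq 1$ on $X$, $f(A) = \{0\}$, and $f(B) = \{1\}$. I would then set
\[ Z_1 = \{x \in X : f(x) \leq 1/3\}, \qquad Z_2 = \{x \in X : f(x) \geq 2/3\}. \]
Both sets belong to $Z(B_1(X))$ by the third bullet in the list of examples of zero sets (in fact $Z_1 = Z((f-\tfrac13)\vee\mathbf{0})$ up to the sign convention recorded there, and similarly for $Z_2$). They are manifestly disjoint and satisfy $A \subseteq Z_1$, $B \subseteq Z_2$.

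For the converse, suppose $A \subseteq Z(f)$ and $B \subseteq Z(g)$ with $f,g \in B_1(X)$ and $Z(f) \cap Z(g) = \emptyset$. The separating function I have in mind is
\[ h = \frac{|f|}{|f| + |g|}. \]
Since $B_1(X)$ is closed under absolute values, sums and products, $|f|+|g|$ lies in $B_1(X)$; and because $Z(f)$ and $Z(g)$ are disjoint, $|f(x)|+|g(x)|>0$ at every $x \in X$. Theorem~\ref{P1 thm_2.1} then furnishes $1/(|f|+|g|) \in B_1(X)$, so multiplication by $|f|$ places $h$ in $B_1(X)$; the estimate $0 \leq h \leq 1$ places it in $B_1^*(X)$. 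Finally $h$ vanishes on $Z(f) \supseteq A$ and equals $1$ on $Z(g) \supseteq B$, which is precisely complete separation of $A$ and $B$ by $B_1(X)$.

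The argument is essentially a transcription of the $C(X)$ proof, and most steps are routine once the Section~2 machinery is in hand. The one step that genuinely needs the new theory is the assertion that $1/(|f|+|g|)$ is Baire one in the absence of any topological hypothesis on $X$: in $C(X)$ this is automatic, but here it rests on Theorem~\ref{P1 thm_2.1} (inversion of strictly positive Baire one functions). That is the pivotal ingredient rather than a serious obstacle, so no further assumption on $X$ is required.
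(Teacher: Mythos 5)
Your proof is correct and follows essentially the same route as the paper: the forward direction uses the disjoint zero sets $\{x : f(x)\leq \tfrac13\}$ and $\{x : f(x)\geq \tfrac23\}$, and the converse uses $h=|f|/(|f|+|g|)$ together with Theorem~\ref{P1 thm_2.1} to invert the strictly positive function $|f|+|g|$. No discrepancies worth noting.
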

\begin{proof}
Let $Z(f)$ and $Z(g)$ are two members of $Z(B_1(X))$ such that $A \subseteq Z(f)$ and $B \subseteq Z(g)$ with $Z(f) \cap Z(g)= \emptyset$. Clearly, the zero set of  $|f|+|g|$  is empty and we may define $h(x)=\frac{|f(x)|}{|f(x)|+|g(x)|}$. Here $(|f(x)|+|g(x)|) > 0$ on $X$ and $|f|+|g| \in B_1(X)$, so by Theorem \ref{P1 thm_2.1} $\frac{1}{|f(x)|+|g(x)|} \in B_1(X)$ which implies $h(x)=\frac{|f(x)|}{|f(x)|+|g(x)|} \in B_1(X)$. Now, $h(Z(f))=0$ and $h(Z(g))=1$. So, $Z(f)$ and $Z(g)$ are completely separated by $B_1(X)$. Therefore $A$ and $B$ are completely separated by $B_1(X)$.\\
Conversely, let $A$ and $A'$ be completely separated by $B_1(X)$. So, there exists $f \in B_1(X)$ such that $f(A)={0}$ and $f(A')={1}$. The disjoint sets $F=\{x: f(x) \leq \frac{1}{3}\}$ and $F'=\{x: f(x) \geq \frac{2}{3}\}$ belong to $Z(B_1(X))$ and $A \subseteq F, A'\subseteq F'$.
\end{proof}

\section{$B_1$-embedded and $B_1^*$-embedded}
\noindent In this section we introduce the analogues of $C$-embedding and $C^*$-embedding, called $B_1$-embedding and $B_1^*$-embedding in connection with the extensions of Baire one functions.
\begin{definition}
A subset $Y$ of a topological space $X$ is called $B_1$-embedded in $X$, if each $f \in B_1(Y) $ has an entension to a $g \in B_1(X)$. i.e. $\exists$ $g \in B_1(X)$ such that $g\big|_Y=f$.\\
Similarly, $Y$ is called $B_1^*$-embedded in $X$, if each $f \in B_1^*(Y) $ has an entension to a $g \in B_1^*(X)$.
\end{definition}
\begin{theorem} \label{P1 thm_5.2}
Any $B_1$-embedded subset is $B_1^*$-embedded.
\end{theorem}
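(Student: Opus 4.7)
The plan is to mimic the classical argument that $C$-embedding implies $C^*$-embedding, by truncating an unbounded extension to obtain a bounded one. Concretely, suppose $Y$ is $B_1$-embedded in $X$ and let $f \in B_1^*(Y)$ with $|f(y)| \leq M$ for all $y \in Y$. Using the $B_1$-embedding hypothesis, I would first pick some (possibly unbounded) extension $g \in B_1(X)$ with $g|_Y = f$, and then replace $g$ by its truncation $g^{*} = (-M \vee g) \wedge M$, where $-M$ and $M$ are interpreted as the corresponding constant functions on $X$.

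The two verifications required are: (i) $g^{*} \in B_1^*(X)$, and (ii) $g^{*}|_Y = f$. For (i), Section 2 records that $B_1(X)$ is a lattice under pointwise order with $p \vee q = \tfrac{1}{2}\{(p+q)+|p-q|\}$ and $p \wedge q = \tfrac{1}{2}\{(p+q)-|p-q|\}$ both lying in $B_1(X)$; since the constants $\pm M$ and $g$ are all in $B_1(X)$, so is $g^{*}$. By construction $-M \leq g^{*}(x) \leq M$ on $X$, hence $g^{*} \in B_1^*(X)$. For (ii), for every $y \in Y$ we have $g(y) = f(y)$ and $-M \leq f(y) \leq M$, so the truncation leaves the value unchanged: $g^{*}(y) = (-M \vee f(y)) \wedge M = f(y)$.

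There is really no substantive obstacle here; the argument is the standard lattice truncation trick, and the only point to be careful about is that the lattice operations together with the constant functions $\pm M$ keep us inside $B_1(X)$, which is exactly the algebraic/lattice structure of $B_1(X)$ recorded in Section 2. Thus $g^{*}$ is a bounded Baire one extension of $f$ to $X$, proving that $Y$ is $B_1^{*}$-embedded.
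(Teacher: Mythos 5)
Your proof is correct and is essentially identical to the paper's: both extend $f$ to some $g \in B_1(X)$ via $B_1$-embedding and then truncate with the lattice operations, $(-M \vee g)\wedge M$, to get a bounded Baire one extension agreeing with $f$ on $Y$. Your write-up is in fact slightly more careful than the paper's in spelling out why the truncation stays in $B_1(X)$ and why it fixes the values on $Y$.
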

\begin{proof}
Let $Y$ be any $B_1$-embedded subset of $X$ and $f \in B_1^*(Y)$. There is a natural number $n$ such that $|f(y)| \leq n$, $\forall y \in Y$. Since $Y$ is $B_1$-embedded, it has an extension to $g \in B_1(X)$. Taking $h=(-n \vee g) \wedge n$. We get $h \in B_1^*(X)$ and $g(y)=h(y), \forall$ $y \in Y$. So, $h$ is an extension of $f$ and $h$ is bounded. Hence $Y$ is $B_1^*$-embedded in $X$.
\end{proof}
\noindent We establish an analogue of Urysohn's extension theorem for continuous functions which gives a necessary and sufficient condition for a subspace to be $B_1^*$-embedded in a topological space $X$. Before proving this theorem we need a lemma that ensures that the uniform limit of a sequence of Baire one functions is a Baire one function. It is to be noted that a special case of this particular result when functions are on real line has already been established. We prove the result in a more general setting.\\
\begin{lemma}\label{P1 lem_5.3}
Let $\{f_n\}$ be a sequence of Baire one functions on $X$. If $\{f_n\}$ converges uniformly to $f$ on $X$ then $f$ is a Baire one function on $X$. 
\end{lemma}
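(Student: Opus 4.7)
The plan is to reduce the uniform-limit statement to the summability Lemma~\ref{P1 lem_4.5} by a telescoping-series trick, exploiting the ring structure of $B_1(X)$.

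First I would use the uniform convergence of $\{f_n\}$ to extract a subsequence $\{f_{n_k}\}_{k\ge 1}$ with the rapid-convergence property
\[
\sup_{x\in X}\bigl|f_{n_{k+1}}(x)-f_{n_k}(x)\bigr|<\frac{1}{2^k}\qquad(k=1,2,\dots).
\]
This is possible because uniform convergence makes $\{f_n\}$ uniformly Cauchy. Having fixed such a subsequence, I would introduce $g_k:=f_{n_{k+1}}-f_{n_k}$ and write, pointwise on $X$,
\[
f(x)=f_{n_1}(x)+\sum_{k=1}^{\infty}g_k(x),
\]
noting that the partial sums telescope to $f_{n_{K+1}}(x)$, which converges to $f(x)$ since $\{f_n\}$ (and hence every subsequence) converges pointwise to $f$.

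Now, since $B_1(X)$ is a ring (established in Section~2), each $g_k=f_{n_{k+1}}-f_{n_k}$ lies in $B_1(X)$. By construction $|g_k(x)|\le M_k$ with $M_k=1/2^k$, and $\sum_{k=1}^{\infty}M_k=1<\infty$. All hypotheses of Lemma~\ref{P1 lem_4.5} are therefore satisfied, so $\sum_{k=1}^{\infty}g_k\in B_1(X)$. Adding the Baire one function $f_{n_1}$ (Baire one functions are closed under addition) gives $f\in B_1(X)$.

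The only delicate point is making sure Lemma~\ref{P1 lem_4.5} is applicable, and this is exactly why the subsequence is needed: without the rapid-convergence refinement, the raw sequence $\{f_{n+1}-f_n\}$ need not satisfy a summable uniform bound. Everything else is routine: pointwise convergence of the telescoping series, the ring operations, and the uniform bound $|g_k|\le 1/2^k$ follow immediately from the choice of subsequence.
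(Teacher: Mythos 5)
Your proposal is correct and follows essentially the same route as the paper: extract a rapidly convergent subsequence, telescope, and apply Lemma~\ref{P1 lem_4.5} to the differences $f_{n_{k+1}}-f_{n_k}$, recovering $f$ as $f_{n_1}$ plus the sum. The only cosmetic difference is that you obtain the bound $|f_{n_{k+1}}-f_{n_k}|<2^{-k}$ directly from the uniform Cauchy property, while the paper chooses $f_{n_k}$ with $|f_{n_k}-f|<2^{-k}$ and gets the slightly larger constant $(3/2)2^{-k}$ by the triangle inequality.
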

\begin{proof}
	Let $\{f_n\}$ be a sequence of Baire one functions converging uniformly to $f$ on $X$. By definition of uniform convergence, for each $k \in \mathbb{N}$, there exists a subsequence $\{f_{n_k}\}$ such that $|f_{n_k}(x)-f(x)| < \frac{1}{2^k}, \forall x\in X$. \\
	Consider the sequence $\{f_{n_{k+1}}-f_{n_k}\}$. Then\\
	$|f_{n_{k+1}}(x)-f_{n_k}(x)| \leq 	|f_{n_{k+1}}(x)-f(x)|+ 	|f_{n_{k}}(x)-f(x)| \leq \frac{1}{2^{k+1}}+\frac{1}{2^k}=(\frac{3}{2})2^{-k}$.\\
	Let $M_k=(\frac{3}{2})2^{-k}$ and note that,
	$|f_{n_{k+1}}(x)-f_{n_k}(x)| \leq M_k, \forall x\in X$ and $\sum\limits_{k=1}^{\infty}M_k < \infty$, where $M_k > 0, \forall k \in \mathbb{N}$. So,
	$\sum_{k=1}^{\infty}[f_{n_{k+1}}(x)-f_{n_k}(x)]$ is a convergent series and so by lemma \ref{P1 lem_4.5} the sum function $\sum_{k=1}^{\infty}[f_{n_{k+1}}(x)-f_{n_k}(x)]$ is a Baire one function on $X$. Now,
	$\sum_{k=1}^{\infty}[f_{n_{k+1}}(x)-f_{n_k}(x)]=\lim\limits_{N\to \infty}\sum_{k=1}^{N}[f_{n_{k+1}}(x)-f_{n_k}(x)]=f(x)-f_{n_1}(x)$. Since, $f_{n_1}$ is a Baire one function, $f$ is also a Baire one function.
\end{proof}
\begin{theorem}\label{P1 thm_5.4}
A subset $Y$ of a topological space $X$ is $B_1^*$-embedded in $X$ iff any pair of subsets of $Y$ which are completely separated in $Y$ by $B_1(Y)$ are also completely separated in $X$ by $B_1(X)$.
\end{theorem}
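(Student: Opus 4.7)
The plan is to mirror the classical proof of Urysohn's extension theorem for $C^*$-embeddings, with Theorem~\ref{P1 thm_4.9} bridging the gap between ``disjoint zero sets in $B_1$'' and ``completely separated by $B_1$,'' and with Lemma~\ref{P1 lem_5.3} (or Lemma~\ref{P1 lem_4.5}) supplying the limit step.

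The forward direction is essentially formal. Suppose $Y$ is $B_1^*$-embedded, and let $A, B \subseteq Y$ be completely separated in $Y$ by $B_1(Y)$. Pick a witness $f \in B_1^*(Y)$ with $f(A) = \{0\}$, $f(B) = \{1\}$, $0 \le f \le 1$, extend to some $g \in B_1^*(X)$, and observe that $g|_Y = f$ still separates $A$ from $B$ in $X$.

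For the converse, take $f \in B_1^*(Y)$ with (after rescaling) $|f| \le 1$ on $Y$. I would iteratively construct $g_n \in B_1^*(X)$ satisfying
\[
|g_n(x)| \le \tfrac{1}{3}\bigl(\tfrac{2}{3}\bigr)^{n-1} \ \text{on } X, \qquad \Bigl|f(y) - \sum_{k=1}^{n} g_k(y)\Bigr| \le \bigl(\tfrac{2}{3}\bigr)^{n} \ \text{on } Y.
\]
The inductive step proceeds as follows. Writing $r_{n-1} := f - \sum_{k<n} g_k$ on $Y$, the sets
\[
A_n = \bigl\{y \in Y : r_{n-1}(y) \le -\tfrac{1}{3}(\tfrac{2}{3})^{n-1}\bigr\}, \quad B_n = \bigl\{y \in Y : r_{n-1}(y) \ge \tfrac{1}{3}(\tfrac{2}{3})^{n-1}\bigr\}
\]
are disjoint zero sets of $r_{n-1} \in B_1^*(Y)$ (they are level sets of the form treated in Section~4), so by Theorem~\ref{P1 thm_4.9} they are completely separated in $Y$ by $B_1(Y)$. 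Invoking the hypothesis, they are completely separated in $X$ by $B_1(X)$, yielding $g_n \in B_1^*(X)$ taking the values $\mp\tfrac{1}{3}(\tfrac{2}{3})^{n-1}$ on $A_n, B_n$ with $|g_n| \le \tfrac{1}{3}(\tfrac{2}{3})^{n-1}$ on $X$. A direct case check on $A_n$, on $B_n$, and on the remaining middle strip of $Y$ shows $|r_{n-1} - g_n| \le (2/3)^n$ on $Y$, which is $|r_n| \le (2/3)^n$.

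Finally, since $\sum \tfrac{1}{3}(\tfrac{2}{3})^{n-1}$ converges, Lemma~\ref{P1 lem_4.5} gives $g := \sum_{n=1}^{\infty} g_n \in B_1(X)$, and the bound yields $|g| \le 1$, so $g \in B_1^*(X)$. The second displayed inequality forces $g|_Y = f$. (Alternatively, the partial sums $s_n = \sum_{k \le n} g_k$ are Baire one and converge uniformly on $X$, so Lemma~\ref{P1 lem_5.3} also delivers $g \in B_1^*(X)$.) The main subtlety is the bookkeeping in the inductive step: one must carefully check that the chosen $g_n$ reduces the supremum of $|r_{n-1}|$ on $Y$ by exactly the factor $2/3$, so that the geometric series closes up and the uniform agreement on $Y$ follows at the limit.
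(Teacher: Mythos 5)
Your proposal is correct and follows essentially the same route as the paper: the same $2/3$-trick iteration (your bounds $|g_n|\le\tfrac13(\tfrac23)^{n-1}$, $|r_n|\le(\tfrac23)^n$ are exactly the paper's $r_n$-scheme after rescaling to $m=1$), the same use of Theorem~\ref{P1 thm_4.9} to pass from disjoint zero sets of the remainder to complete separation, and the same appeal to Lemma~\ref{P1 lem_4.5} for the limit step and telescoping to get $g|_Y=f$. No gaps.
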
 
\begin{proof}
Let $Y$ be a $B_1^*$-embedded subspace of $X$ and $P$, $Q$ be subsets of $Y$ completely separated in $Y$ by $B_1(Y)$. So, there exists $f \in B_1^*(Y)$ such that, $f(P)=0$ and $f(Q)=1$. Since $Y$ is $B_1^*$-embedded in $X$, $f$ has an extension to $g \in B_1^*(X)$ such that $g(P)=0$ and $g(Q)=1$. Hence, $P$, $Q$  are completely separated in $X$ by $B_1(X)$.\\
\noindent Conversely, assume that any two completely separated sets in $Y$ by $B_1(Y)$ are also completely separated in $X$ by $B_1(X$). Let $f_1 \in B_1^*(Y)$. Then there exists a natural number $m$ such that, $|f_1(y)| \leq m$, $\forall y \in Y$.\\
For $n \in \mathbb{N}$, let $r_n=(\frac{m}{2})({\frac{2}{3}})^n$. Then $3 r_{n+1}=2r_n, \forall n$ and $|f_1| \leq 3r_1$.\\
Let $A_1=\{y \in Y: f_1(y)\leq -r_1\}$ and $B_1=\{y \in Y: f_1(y)\geq r_1\}$. Clearly, $A_1$ and $B_1$ are disjoint members in $Z(B_1(Y))$ and therefore, are completely separated in $Y$ by $B_1(Y)$. By our hypothesis $A_1$, $B_1$ are completely separated in $X$ by $B_1(X)$. So, there exists $g_1\in B_1^*(X)$ such that, $g_1(A_1)=-r_1$ and $g_1(B_1)=r_1$ also $-r_1 \leq g_1 \leq r_1$.\\
 Let $f_2=f_1-g_1\big|_Y$, then $f_2 \in B_1^*(Y)$ and $|f_2|\leq 2r_1$ on $Y$.\\
 So, $|f_2|\leq 3r_2$.\\
 Let  $A_2=\{y \in Y: f_2(y)\leq -r_2\}$ and $B_2=\{y \in Y: f_2(y)\geq r_2\}$. Then $A_2$ and $B_2$ are completely separated in $X$ by member of $B_1(X)$. So, there exists $g_2:X \rightarrow [-r_2,r_2]$, such that, $g_2 \in B_1^*(X)$ and $g_2(A_2)=-r_2,$ $g_2(B_2)=r_2$. Taking $f_3=f_2-g_2\big|_Y$ we get $f_3\in B_1^*(Y)$ with $|f_3|\leq 2r_2=3r_3$.\\
 If we continue this process and use principle of mathematical induction then we get two sequences $\{f_n\} \subseteq B_1^*(Y)$ and $\{g_n\} \subseteq B_1^*(X)$, with the following properties:\\
 $|f_n|\leq 3r_n$ and $f_{n+1}=f_n-g_n\big |_Y, \forall n \in \mathbb{N}$.\\
 Let $g(x)=\sum\limits_{n=1}^{\infty}g_n(x), \forall x \in X$. It is clear that, $|g_n(x)|\leq r_n, \forall n$ and $\forall x \in \mathbb{X}$, $g_n \in B_1(X), \forall n \in \mathbb{N}$. Also, $\sum\limits_{n=1}^{\infty}r_n < \infty$, where $r_n > 0, \forall n \in \mathbb{N}$.\\
 So, by lemma \ref{P1 lem_4.5} $g$ is a Baire one function on $X$.\\
 We claim that, $g(y)=f_1(y),\forall y \in Y$.\\
 $g(y)=\lim\limits_{n \to \infty}[g_1(y)+g_2(y)+...+g_n(y)]=\lim\limits_{n \to \infty}[f_1(y)-f_2(y)+f_2(y)-f_3(y)+...+f_n(y)]=f_1(y)-\lim\limits_{n \to \infty}f_{n+1}(y)=f_1(y)$ (as  $|f_n|\leq 3r_n$ and $\sum\limits_{n=1}^{\infty}r_n$ is a geometric series with common ratio less than $1$). So, $f \in B_1^*(Y)$ has an extension $g \in B_1(X)$ and we know that if a Baire one function  $f \in B_1^*(Y)$ has an extension in $B_1(X)$, then it has an extension in $B_1^*(X)$. Hence, $Y$ is $B_1^*$-embedded.
\end{proof}
\begin{theorem}\label{P1 thm_5.5}
If $Y$ is any $B_1$-embedded subspace of a topological space $X$ then $Y$ is completely separated in $X$ by $B_1(X)$, from any zero set of $Z(B_1(X))$ disjoint from $Y$.
\end{theorem}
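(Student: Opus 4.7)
The plan is to realize the disjoint zero set as $Z(f)$ for some $f \in B_1(X)$, pass to $|f|$ to get a strictly positive Baire one function on $Y$, invert it there, and use $B_1$-embeddedness to push the inverse back to $X$ so that it multiplies with $|f|$ to give the desired separating function.

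More precisely, suppose $Z \in Z(B_1(X))$ is disjoint from $Y$. Write $Z = Z(f)$ for some $f \in B_1(X)$ and replace $f$ by $|f|$; note $|f| \in B_1(X)$ (the excerpt records that $|\cdot|$ preserves $B_1$) and $Z(|f|) = Z(f) = Z$. Next I would check the routine fact that the restriction $|f|\big|_Y$ lies in $B_1(Y)$: if $\{f_n\} \subseteq C(X)$ converges pointwise to $|f|$ on $X$, then $\{f_n\big|_Y\} \subseteq C(Y)$ converges pointwise to $|f|\big|_Y$ on $Y$. Since $Y \cap Z = \emptyset$, we have $|f|(y) > 0$ for every $y \in Y$, so Theorem~\ref{P1 thm_2.1} applied on the topological space $Y$ gives $\varphi := 1/|f|\big|_Y \in B_1(Y)$.

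Now I invoke the assumption that $Y$ is $B_1$-embedded in $X$: the function $\varphi$ extends to some $h \in B_1(X)$ with $h\big|_Y = \varphi$. Put $g := |f| \cdot h$. Since $B_1(X)$ is a ring, $g \in B_1(X)$. For every $y \in Y$, $g(y) = |f|(y) \cdot \frac{1}{|f|(y)} = 1$, and for every $x \in Z = Z(|f|)$, $g(x) = 0 \cdot h(x) = 0$. Finally, as observed right after the definition of complete separation by $B_1(X)$, the function $(\mathbf{0} \vee g) \wedge \mathbf{1} \in B_1^*(X)$ takes the value $0$ on $Z$ and $1$ on $Y$ with $0 \leq (\mathbf{0} \vee g) \wedge \mathbf{1} \leq 1$ everywhere, witnessing that $Y$ and $Z$ are completely separated in $X$ by $B_1(X)$.

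The only mildly delicate step is producing an inverse that lives in $B_1(Y)$, since the hypotheses of Theorem~\ref{P1 thm_2.1} require the function to be strictly positive (or strictly negative) everywhere; that is precisely why I replace $f$ by $|f|$ at the outset. Everything else is either a direct application of the already-proved machinery ($B_1$ is closed under products and restrictions, $Z(|f|)=Z(f)$) or the standard boundedness cut-off that the paper's own remark on complete separation already permits.
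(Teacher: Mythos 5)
Your proposal is correct and follows essentially the same route as the paper's own proof: restrict $|f|$ to $Y$, invert it there via Theorem~\ref{P1 thm_2.1}, extend the inverse to $X$ using $B_1$-embeddedness, and multiply back by $|f|$ to obtain a function that is $0$ on $Z(f)$ and $1$ on $Y$. The extra details you supply (that restriction preserves membership in $B_1$, and the final truncation $(\mathbf{0}\vee g)\wedge\mathbf{1}$) are correct and merely make explicit what the paper leaves implicit.
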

\begin{proof}
Let $Y$ be a $B_1$-embedded subspace of $X$ and $Z(f)$ be a zero set in $Z(B_1(X))$, disjoint from $Y$.\\
Let $h:Y \rightarrow \mathbb{R}$ be a map defined as, $h(y)= \frac{1}{|f(y)|}$. Here, $|f|\bigg |_Y \in B_1(Y)$ and $|f(y)| > 0$ on $Y$ as $Z(f) \cap Y = \emptyset$. So, by Theorem \ref{P1 thm_2.1}, $h \in B_1(Y)$. As $Y$ is $B_1$-embedded, $h$ has an extension $g \in B_1(X)$. Then $|f|.g \in B_1(X)$ and $(|f|.g)(Z(f))=0$, $(|f|.g)(y)=1, \forall$ $ y \in Y$. It follows that $Y$ and $Z(f)$ are completely separated in $X$ by $B_1(X)$.
\end{proof}
\begin{theorem} \label{P1 thm_5.6}
A $B_1^*$-embedded subset $Y$ of $X$ is a $B_1$-embedded subset in $X$ if and only if it is completely separated in $X$ by $B_1(X)$ from any disjoint zero set in $Z(B_1(X))$.
\end{theorem}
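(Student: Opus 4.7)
The forward implication is immediate from Theorem~\ref{P1 thm_5.5}: if $Y$ is $B_1$-embedded, then it is automatically completely separated in $X$ by $B_1(X)$ from every disjoint zero set of $Z(B_1(X))$.

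For the converse, I plan to mimic the classical Gillman--Jerison reduction of $C$-embedding to $C^*$-embedding, with Theorem~\ref{P1 thm_2.2} replacing the usual remark that composition with a continuous function preserves membership in $C(Y)$. Given $f \in B_1(Y)$, compose it with the continuous map $\varphi \colon \mathbb{R} \to \mathbb{R}$, $\varphi(t) = t/(1+|t|)$; by Theorem~\ref{P1 thm_2.2}, $f_1 := \varphi \circ f \in B_1^*(Y)$ with $|f_1(y)| < 1$ for every $y \in Y$. Using the $B_1^*$-embedding of $Y$, extend $f_1$ to some $g_1 \in B_1^*(X)$, and after replacing $g_1$ by $(-1 \vee g_1) \wedge 1$ we may assume $-1 \leq g_1 \leq 1$ on $X$.

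The obstruction is that $g_1$ may attain the values $\pm 1$ on $X \setminus Y$, which would prevent inverting $\varphi$. Set $Z := \{x \in X : |g_1(x)| = 1\} = Z(1 - g_1^2)$, a member of $Z(B_1(X))$ disjoint from $Y$ since $|f_1| < 1$ on $Y$. By the standing hypothesis applied to this disjoint zero set, there exists $k \in B_1(X)$ with $0 \leq k \leq 1$, $k(Y) = \{1\}$, and $k(Z) = \{0\}$. Let $g_2 := k \cdot g_1 \in B_1(X)$. Then $g_2|_Y = f_1$ (because $k \equiv 1$ on $Y$), and $|g_2| < 1$ everywhere on $X$: on $Z$ it vanishes, while on $X \setminus Z$ we have $|g_2| \leq |g_1| < 1$.

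Finally, since $1 - |g_2|$ is a strictly positive Baire one function on $X$, Theorem~\ref{P1 thm_2.1} yields $1/(1 - |g_2|) \in B_1(X)$, hence $g := g_2/(1 - |g_2|) \in B_1(X)$. On $Y$, a direct calculation gives $g = f_1/(1 - |f_1|) = \varphi^{-1}(f_1) = f$, so $g$ extends $f$ as required, and $Y$ is $B_1$-embedded. The main obstacle, and the sole place where the separation hypothesis is used, is manufacturing the extension $g_2$ with the strict inequality $|g_2| < 1$ on all of $X$; the multiplier $k$ supplied by the disjoint zero set $Z$ is precisely what enforces this.
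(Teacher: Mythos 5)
Your proof is correct and follows essentially the same route as the paper: compose $f$ with a homeomorphism of $\mathbb{R}$ onto a bounded open interval, extend via the $B_1^*$-embedding, use the separation hypothesis to kill the zero set where the extension attains the boundary, and then invert. The only differences are cosmetic --- you use $t \mapsto t/(1+|t|)$ where the paper uses $\tan^{-1}$, and you realize the inversion through ring operations together with Theorem~\ref{P1 thm_2.1}, which is, if anything, a more carefully justified step than the paper's direct assertion that $\tan(g\cdot h) \in B_1(X)$.
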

\begin{proof}
Let $B_1^*$-embedded subset $Y$ of $X$ is $B_1$-embedded then the result follows from Theorem \ref{P1 thm_5.5}.\\
Conversely, let $Y$ be completely separated in $X$ by $B_1(X)$ from any zero set in $Z(B_1(X))$ disjoint from $Y$. Let $f \in B_1(Y)$. We consider the function $tan^{-1}: \mathbb{R} \rightarrow (-\frac{\pi}{2},\frac{\pi}{2})$. By Theorem \ref{P1 thm_2.2} the function $tan^{-1}\circ f$ is a bounded Baire one function. As $Y$ is $B_1^*$-embedded in $X$, $tan^{-1}\circ f$ has an extension to a function $g \in B_1(X)$.\\
Let $Z=\{x\in X: |g(x)|\geq \frac{\pi}{2}\}$, then $Z \in Z(B_1(X))$ and $Z \cap Y = \emptyset$. So, by hypothesis, there exists a Baire one function $h:X \rightarrow [0,1]$ such that $h(Z)=0$ and $h(Y)=1$. We see that, $g.h \in B_1(X)$ and $\forall x \in X, |(g.h)(x)| < \frac{\pi}{2}$. So, $tan(g.h) \in B_1(X)$ and $\forall$ $y \in Y$, $tan(g.h)(y)=f(y)$. Hence, $Y$ is $B_1$-embedded.
\end{proof}
\begin{corollary}
For any topological space $X$, a zero set $Z \in Z(B_1(X))$ is $B_1^*$-embedded if and only if it is $B_1$-embedded.
\end{corollary}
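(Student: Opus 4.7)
The corollary is a short combination of three earlier results, so my plan is to split it into the two implications and invoke them.

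For the forward direction, namely that a $B_1$-embedded zero set is $B_1^*$-embedded, I would simply cite Theorem~\ref{P1 thm_5.2}, which already gives this without using the zero-set hypothesis. No real work is needed here.

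For the reverse direction, assume $Z \in Z(B_1(X))$ is $B_1^*$-embedded. The natural tool is Theorem~\ref{P1 thm_5.6}, which upgrades $B_1^*$-embedding to $B_1$-embedding provided $Z$ is completely separated in $X$ by $B_1(X)$ from every zero set of $Z(B_1(X))$ disjoint from $Z$. So I would take an arbitrary $Z(f) \in Z(B_1(X))$ with $Z \cap Z(f) = \emptyset$ and show that $Z$ and $Z(f)$ are completely separated in $X$ by $B_1(X)$. But $Z$ is itself a zero set in $Z(B_1(X))$ by hypothesis, so $Z$ and $Z(f)$ are two disjoint zero sets in $Z(B_1(X))$; Theorem~\ref{P1 thm_4.9} then yields directly that they are completely separated by $B_1(X)$.

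Combining the two steps closes the equivalence. There is no serious obstacle: the essential content of the corollary is really packed into Theorem~\ref{P1 thm_5.6}, and the role of the assumption $Z \in Z(B_1(X))$ is precisely to let Theorem~\ref{P1 thm_4.9} supply the separation hypothesis of Theorem~\ref{P1 thm_5.6} for free. The only thing I would double-check while writing is that the statements of Theorems~\ref{P1 thm_4.9} and~\ref{P1 thm_5.6} are invoked with the correct containment: in Theorem~\ref{P1 thm_4.9} we use the trivial containments $Z \subseteq Z$ and $Z(f) \subseteq Z(f)$ to conclude complete separation, and in Theorem~\ref{P1 thm_5.6} we apply it with $Y = Z$.
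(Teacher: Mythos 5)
Your proposal is correct and follows exactly the route the paper intends: the paper leaves the corollary unproved as an immediate consequence of Theorem~\ref{P1 thm_5.2} for the easy direction and of Theorem~\ref{P1 thm_5.6} combined with Theorem~\ref{P1 thm_4.9} for the substantive one, which is precisely your decomposition. The observation that the hypothesis $Z \in Z(B_1(X))$ makes the separation condition of Theorem~\ref{P1 thm_5.6} automatic via disjoint zero sets is the whole content, and you have it.
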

\newpage

\end{document}